\documentclass[letterpaper,review,preprint,12pt]{elsarticle}
\pdfoutput=1
\journal{}



\usepackage{amssymb}
\usepackage{amsthm}
\usepackage{amsmath}
\usepackage{upgreek}
\usepackage[eulergreek]{sansmath}
\usepackage{pdflscape}
\usepackage{multirow}
\usepackage{multicol}
\usepackage{color}
\usepackage{stmaryrd}
\usepackage{listings}
\usepackage{varwidth}
\usepackage{booktabs}
\usepackage[section]{placeins} 
\usepackage{calc}
\usepackage[section]{algorithm}
\usepackage[titletoc]{appendix}

\theoremstyle{remark}
\newtheorem*{remark}{Remark}

\usepackage{float}
\usepackage{wrapfig}
\usepackage{epstopdf}
\usepackage[percent]{overpic}
\usepackage{graphicx}
\usepackage{graphics}
\usepackage{graphicx}
\usepackage{subcaption}
\usepackage{varwidth}

\usepackage{tikz}
\usepackage{pgfplots}
\usetikzlibrary{arrows,matrix,positioning,fit}
\usetikzlibrary{shapes,positioning}
\usetikzlibrary{backgrounds}
\usepgfplotslibrary{colorbrewer}
\usepgfplotslibrary{patchplots}
\usepgfplotslibrary[colorbrewer]
\usetikzlibrary{pgfplots.colorbrewer}
\usetikzlibrary[pgfplots.colorbrewer]
\usetikzlibrary{pgfplots.groupplots}
\usepgfplotslibrary{units}
\usetikzlibrary{spy}
\usepackage{pgfplotstable}
\graphicspath{ {./Figs/} }

\newsavebox\mybox

\newlength\myheight
\newlength\mydepth
\settototalheight\myheight{Xygp}
\settodepth\mydepth{Xygp}
\setlength\fboxsep{0pt}
\newcommand*\inlinegraphics[1]{%
  \settototalheight\myheight{Xygp}%
  \settodepth\mydepth{Xygp}%
  \raisebox{-\mydepth}{\includegraphics[height=\myheight]{#1}}%
}
\newcommand\orcid[1]{\href{https://orcid.org/#1}{\inlinegraphics{orcid_16x16.png}}}

\makeatletter
\def\BState{\State\hskip-\ALG@thistlm}
\makeatother

\newtheorem{theorem}{Theorem}[section]

\newtheorem{lemma}[theorem]{Lemma}

\newcommand{\etal}{et~al.}

\newcommand{\hfd}{\hat{f}^{\delta}}

\newcommand{\hfI}{\hat{f}^{\delta I}}
\newcommand{\hfC}{\hat{f}^{\delta C}}
\newcommand{\hfD}{\hat{f}^{\delta D}}

\newcommand{\hud}{\hat{u}^{\delta}}

\newcommand{\bud}{\mathbf{u}^{\delta}}

\newcommand{\huI}{\hat{u}^{\delta I}}

\newcommand\px[2]{\frac{\partial #1}{\partial {#2}}}
\newcommand\pxi[3]{\frac{\partial^{#1}#2}{\partial {#3}^{#1}}}
\newcommand\dx[2]{\frac{\mathrm{d} #1}{\mathrm{d} #2}}
\newcommand\dxi[3]{\frac{\mathrm{d}^{#1}#2}{\mathrm{d} {#3}^{#1}}}
\newcommand\rint[2]{\int^{1}_{-1}{#1} \mathrm{d}{#2}}
\newcommand\poch[2]{({#1})_{#2}}
\newcommand{\half}{\frac{1}{2}}

\newcommand\CONDITION[2]%



\begin{document}

\begin{frontmatter}

\title{A New Family of Weighted One-Parameter Flux Reconstruction Schemes}

\author{W. Trojak\corref{cor1}~\orcid{0000-0002-4407-8956}}
\ead{wt247@tamu.edu}
\cortext[cor1]{Corresponding author}
\address{Department of Ocean Engineering, Texas A\&M University, College Station}

\author{F. D. Witherden\corref{cor2}~\orcid{0000-0003-2343-412X}}
\ead{fdw@tamu.edu}
\address{Department of Ocean Engineering, Texas A\&M University, College Station}

\begin{abstract}
    The flux reconstruction (FR) approach offers a flexible framework for describing a range of high-order numerical schemes; including nodal discontinuous Galerkin and spectral difference schemes. 
    This is accomplished through the use of so-called correction functions.
    In this study we employ a weighted Sobolev norm to define a new extended family of FR correction functions, the stability of which is affirmed through Fourier analysis. 
    Several of the schemes within this family are found to exhibit reduced dissipation and dispersion overshoot.
    Moreover, many of the new schemes possess higher CFL limits whilst maintaining the expected rate of convergence.
    Numerical experiments with homogeneous linear convection and Burgers’ turbulence are undertaken, and the results observed to be in agreement with the theoretical findings.
\end{abstract}

\begin{keyword}
High Order \sep Flux Reconstruction \sep Discontinuous Galerkin \sep Energy Stable \sep Spectral Difference
\begin{MSC}[2010]
46E39 \sep 46N40 \sep 65M60 \sep 65T99 \sep 76F65
\end{MSC}
\end{keyword}

\end{frontmatter}



\section{Introduction}
\label{sec:intro}

Discontinuous spectral element methods have emerged as an attractive alternative to classical finite element and finite volume methods for high-order accurate numerical simulations on unstructured grids.  Such methods offer the promise of increased accuracy at reduced cost \cite{Vincent2011a,Vermeire2017b}.

A popular example of such a method is the Flux Reconstruction (FR) approach of Huynh~\cite{Huynh2007}.  Closely related to the lifting collocation penalty (LCP) schemes of Gao~and~Wang~\cite{Gao2009,Wang2009} and the correction procedure via reconstruction (CPR) method of Haga~\etal~\cite{Haga2011}, the FR approach offers more flexibility as rather than defining \emph{one} scheme it instead defines an \emph{infinite family} of schemes.  This flexibility enables the FR approach to recover several existing spectral element schemes.  These include a nodal form of the discontinuous Galerkin method of Reed~and~Hill~\cite{Reed1973} as described in Hesthaven~and~Warburton~\cite{Hesthaven2007} and, for a linear flux function, the spectral difference (SD) schemes of Kopriva~and~Kolias~\cite{Kopriva1996}, Liu~\etal~\cite{Liu2006}, and Sun~\etal~\cite{Sun2007}.  The flexibility of the FR framework also enables the construction of new and novel schemes.

In 2011, Vincent~\etal~\cite{Vincent2011} identified a one-parameter family of correction functions, herein referred to as original stable FR (OSFR) schemes, that led to stable FR schemes for linear advection problems.  This work was subsequently extended to linear advection-diffusion problems by Castonguay~\etal~\cite{Castonguay2014}.  More recently Vincent~\etal~\cite{Vincent2015} identified a multi-parameter family of linearly stable FR correction functions which are herein referred to as the extended stable FR (ESFR) schemes.  In a series of numerical experiments, Vermeire~and~Vincent~\cite{Vermeire2016a} observed that several of these schemes are more stable for implicit large eddy-simulation (ILES) simulations than NDG.

Further advancements have been made in correction functions with the definition of generalised Sobolev stable FR (GSFR)~\cite{Trojak2018} and generalised Lebesgue stable FR (GLSFR)~\cite{Trojak2018b}. These approaches both vastly increase the scope of FR.  A pictorial illustration of the current space of FR schemes can be seen in Fig.~\ref{fig:venn_diagram1}.  In spite of this progress a broader definition of what correction function is and how one should be chosen far not fully understood.  To this end, in this paper we seek to extend the theoretical understanding of FR correction functions.  We shall accomplish this through the application of a \emph{weighted} Sobolev type norm---something which has previously been employed successfully within the summation-by-parts community.  The use of a weight function also makes it possible to establish additional connections between FR and LCP.  The result of this work is a further generalisation of the OSFR correction functions.  This new family is capable of recovering a range of different SD schemes (as opposed to the single scheme recovered by OSFR), along with several entirely new schemes.
	
	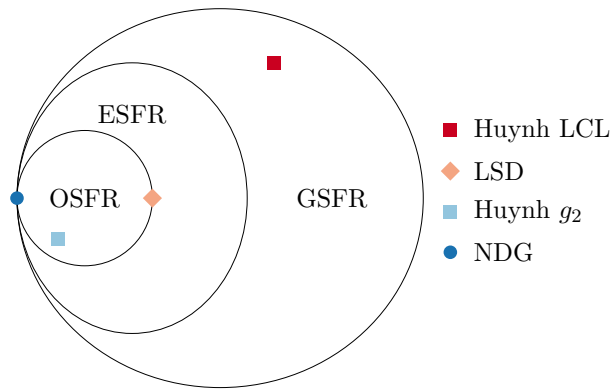
\begin{figure}[tbhp]
		\centering
			\resizebox{0.6\linewidth}{!}{\begin{tikzpicture}[every node/.style={font=\small},scale=2]
	\def\esfr{(0.1,0) ellipse (0.85 and 1)}
	\def\osfr{(-0.25,0) ellipse (0.5 and 0.5)}
	\def\gsfr{(0.75,0) ellipse (1.5 and 1.4)}
	\definecolor{col1}{RGB}{202,0,32}
	\definecolor{col2}{RGB}{244,165,130}
	\definecolor{col3}{RGB}{146,197,222}
	\definecolor{col4}{RGB}{25,113,176}
   
    \begin{scope}[] 
        
        \draw \esfr node[label={[xshift=0, yshift=2em]\small{ESFR}}]{};
        \draw \osfr node[label={[xshift=0em, yshift=-1em]\small{OSFR}}]{};
        \draw \gsfr node[label={[xshift=4em, yshift=-1em]\small{GSFR}}]{};
        
        \fill[fill=col1] (1.1,0.95) rectangle (1.2,1.05);
        \fill[fill=col2,rotate around={45:(0.25,0)}] (0.2,-0.05) rectangle (0.3,0.05);
        \fill[fill=col3] (-0.5,-0.35) rectangle (-0.4,-0.25);
        \fill[fill=col4] (-0.75,0) circle (0.05);
        
        \fill[fill=col1] (2.4,0.45) rectangle (2.5,0.55);
        \node[anchor=west] at (2.55,0.5) {Huynh LCL};
        
        \fill[fill=col2,rotate around={45:(2.45,0.2)}] (2.4,0.15) rectangle (2.5,0.25);
        \node[anchor=west] at (2.55,0.2) {LSD};
        
        \fill[fill=col3] (2.4,-0.15) rectangle (2.5,-0.05);
        \node[anchor=west] at (2.55,-0.1) {Huynh $g_2$};
        
        \fill[fill=col4] (2.45,-0.4) circle (0.05);
        \node[anchor=west] at (2.55,-0.4) {NDG};
    \end{scope}

\end{tikzpicture}}
		\caption{Euler diagram to show the interconnection of the spaces of FR correction functions: Nodal DG~(NDG)~\cite{Huynh2007}; original stable FR~(OSFR)~\cite{Vincent2010}; extended range stable FR~(ESFR)~\cite{Vincent2015} and, generalised Sobolev stable FR~(GSFR)~\cite{Trojak2018}. Some specific examples of specific schemes are given, notably Huynh's lumped Chebyshev--Lobatto~(LCL)~\cite{Huynh2007} scheme and the original Legendre spectral difference (LSD) scheme~\cite{Kopriva1996,Huynh2007,Jameson2010}.}
		\label{fig:venn_diagram1}
	\end{figure}

The remainder of this paper is structured as follows.  In section~\ref{sec:fr} we provide an overview of the FR approach for a one-dimensional advection problem and introduce the one-parameter family of energy stable FR schemes.  Our extension of these schemes is presented in section~\ref{sec:stability} with stability being discussed in section~\ref{sec:limits}.  Here we also detail connections between our new family of schemes and SD schemes.  The explicit time-step limits associated with our new family of schemes are reviewed in section~\ref{sec:conv}.  The performance of these schemes within the context of a heterogeneous linear conservation equation and Burgers' turbulence are assessed in section~\ref{sec:numeric}.  Finally, in section~\ref{sec:conclusions}, conclusions are drawn.
\section{Flux Reconstruction}
\label{sec:fr}

Consider using the FR approach to solve the 1D conservative equation
	\begin{equation}\label{eq:governing}
		\px{u}{t}+\px{f}{x}=0, \quad \mathrm{for} \quad x\in \mathbf{\Omega}, \quad \mathrm{and} \quad t\in \mathbb{R}^+.
	\end{equation}
	The FR approach involves partitioning the domain, $\mathbf{\Omega}$, into $N$ distinct elements, each denoted by $\mathbf{\Omega}_n=\{x|x_n<x<x_{n+1}\}$, such that
	\begin{equation}\label{eq:partition}
		\mathbf{\Omega}=\bigcup_{n=0}^{N-1}\mathbf{\Omega}_n
		\hspace{1cm}\mathrm{and}\hspace{1cm}
		\bigcap_{n=0}^{N-1}\mathbf{\Omega}_n=\emptyset.
	\end{equation}
	The solution $u$ in Eq.~(\ref{eq:governing}) is approximated in each $\mathbf{\Omega}_n$ by $u^{\delta}_n=u^{\delta}_n(x,t)$, which is a polynomial of degree $p$ within $\mathbf{\Omega}_n$, and the flux $f$ is approximated by $f^{\delta}_n=f^{\delta}_n(x,t)$, a degree $p+1$ polynomial within $\mathbf{\Omega}_n$.

	To permit the use of standardised operators, the mapping of each element, $\mathbf{\Omega}_n$, to a reference element, $\hat{\mathbf{\Omega}}=\{\zeta|-1\leq \zeta\leq 1\}$, is calculated
	\begin{equation}\label{eq:mapping}
		\zeta=\Theta_n(x)=2\left(\frac{x-x_n}{x_{n+1}-x_n}\right)-1,
	\end{equation}
	which has the inverse
	\begin{equation}
  		x=\Theta_n^{-1}(\zeta)=\left(\frac{1-\zeta}{2}\right)x_n+\left(\frac{1+\zeta}{2}\right)x_{n+1}.
	\end{equation}
	The governing equation, Eq.~(\ref{eq:governing}), may then be cast into the reference domain as	
	\begin{equation}\label{eq:stand_ele_govern}
		\px{\hud}{t}+\px{\hfd}{\zeta}=0,
	\end{equation}
	where
	\begin{subequations}\label{eq:t-soln}
		\begin{align}
			\hud=\hud(\zeta,t) &= J_{n}u^{\delta}_n(\Theta_n^{-1}(\zeta),t), \\
			\hfd=\hfd(\zeta,t) &= f^{\delta}_n(\Theta_n^{-1}(\zeta),t).
		\end{align}	
	\end{subequations}
	For linearly deformed 1D elements the Jacobian is simply $J_n=(x_{n+1}-x_{n})/2$.

	If inside the reference element, there is a set of points where the solution is known, $\zeta_i$ ($i=\{0\dots p\}$), then $\hud$ and $\hfD$ may be defined with Lagrange polynomials as
	\begin{equation}\label{eq:trans_soln}
		\hud = \sum_{i=0}^{p}\hud_{i}\;l_i \quad \mathrm{and} \quad  \hfD=\sum_{i=0}^{p}\hfD_{i}\;l_i,
	\end{equation}
	where $\hud_i=\hud(\zeta_i)$ and $\hfD_i$ is the flux calculated from $\hud_i$. The Lagrange polynomials in the previous definition take the form
	\begin{equation}
		l_i = \prod_{j=0, j\neq i}^{p}\left(\frac{\zeta-\zeta_j}{\zeta_i-\zeta_j}\right). 
	\end{equation}
	
	The flux polynomial in Eq.~(\ref{eq:trans_soln}) currently represents a transformed flux that is discontinuous between elements. This is made $C^0$ continuous in 1D as 
	\begin{equation}\label{eq:f_total}
		\hfd = \hfD + \hfC = \hfD + (\hfI_L - \hfD_L)h_L + (\hfI_R - \hfD_R)h_R.
	\end{equation}
	$\hfI_L$ is a common flux at the left element interface, $\zeta=-1$. This value is common with a collocated point in the associated adjacent element. $\hfI_R$ is similarly defined.	This common value is typically calculated using an approximate Riemann solver, but any E-flux is sufficient~\cite{Osher1984,Jameson2012}. Here, $h_L$ and $h_R$ are degree $p+1$ correction function polynomials with boundary conditions
	\begin{subequations}\label{eq:h_BCs}
		\begin{align}
			& h_L(-1)=1, \hspace{0.5cm} h_L(1)=0, \\
			& h_R(-1)=0, \hspace{0.5cm} h_R(1)=1.
		\end{align}
	\end{subequations}
	These conditions are such that the corrected flux, $\hfC$, takes the value of $\hfI_L$ and $\hfI_R$ are the respective interfaces. The spatial derivative of the corrected flux may then be taken
	\begin{equation}\label{eq:fr-nodal}
		\px{\hfd}{\zeta}(\zeta_i) = \sum_{j=0}^{p}\hat{f}^{\delta
D}_j\;\dx{l_j}{\zeta}(\zeta_i) + (\hfI_L-\hfD_L)\dx{h_{L}}{\zeta}(\zeta_i) + (\hfI_R-\hfD_R)\dx{h_{R}}{\zeta}(\zeta_i).
	\end{equation}
	As the correction functions were degree $p+1$, the derivative of the corrected flux and the solution are now both in a degree $p$ polynomial space. Hence they may be combined in the governing equation
	\begin{equation}\label{eq:semi_disc}
		\dx{\hud_i}{t}=-\px{\hfd}{\zeta}(\zeta_i).
	\end{equation}
	The solution can then be advanced with a suitable choice of time integration or ODE method.
	
	The nature of a particular FR scheme depends on three factors: the location of the solution points, the interface flux formulation, and the form of the correction functions.  In their 2007 paper introducing FR, Huynh~\cite{Huynh2007} found that a collocation-based nodal DG scheme (NDG) is recovered if the left and right correction functions are taken to be the right and left Radau polynomials, respectively.  They also showed how, at least for a linear flux function, it is possible for FR to recover any SD scheme.  Moreover, Huynh also presented several new and novel schemes with various stability and accuracy properties.

	Through a tensor-product construction the FR approach can be readily extended to handle quadrilateral and hexahedral elements.  Further details can be found in \cite{Huynh2007,Witherden2014}.
	
	The defining feature of the FR methodology is the unification it brings to several schemes through the correction functions.  The identification of such functions is the main concern of the present work, extending the OSFR set of schemes~\cite{Vincent2011}. Specifically, the OSFR corrections functions are obtained by setting:
\[
 h_L = \frac{(-1)^p}{2}\bigg[L_p - \frac{\eta_pL_{p-1} + L_{p+1}}{1 + \eta_p}\bigg],
\]
and
\[
 h_R = \frac{1}{2}\bigg[L_p + \frac{\eta_pL_{p-1} + L_{p+1}}{1 + \eta_p}\bigg],
\]
where $L_p=L_p(\zeta)$ is a Legendre polynomial of degree $p$ and
\[
 \eta_p = \frac{c(2p+1)(a_pp!)^2}{2} \quad \text{and} \quad a_p = \frac{(2p)!}{2^p(p!)^2},
\]
with $c$ being a free scalar parameter which must lie within the range
\[
 \frac{-2}{(2p+1)(a_pp!)^2} < c < \infty.
\]
Such correction functions satisfy
\begin{subequations}\label{eq:osfr_cond}
	\begin{align}
		\int_{-1}^{1} h_L \px{\hud}{\zeta} \,\mathrm{d}\zeta - c\pxi{p}{\hud}{\zeta} \dxi{p+1}{h_L}{\zeta} &= 0, \\
		\int_{-1}^{1} h_R \px{\hud}{\zeta} \,\mathrm{d}\zeta - c\pxi{p}{\hud}{\zeta} \dxi{p+1}{h_R}{\zeta} &= 0.
	\end{align}
\end{subequations}
This property ensures that the resulting scheme will be linearly stable in the broken-Sobolev type norm
\begin{equation}
 \sum^N_{n=1}\|\hud_n\|^2_{W^c_2} = \sum_{n=1}^{N} \int_{x_n}^{x_{n+1}} (u_n^{\delta})^2 + \frac{c}{2}J_n^{2p} \bigg(\pxi{p}{u_n^{\delta}}{x}\bigg)^2 \,\mathrm{d}x.
\end{equation}
i.e. the energy is sufficiently constrained to give a bounded functional space of solutions.

\section{Weighted Energy Stability}
\label{sec:stability}

	Following on from the definition of OSFR, we may define the Jacobi weighted Sobolev norm~\cite{Zhikov1998} in the reference domain as
	\begin{equation}\label{eq:weighted_sobolev}
		\|\hud\|^2_{W_2^{\iota,w}} = \rint{\Bigg(\big(\hud\big)^2 + \iota\bigg(\pxi{p}{\hud}{\zeta}\bigg)^2\Bigg)(1-\zeta)^{\alpha}(1+\zeta)^{\beta}}{\zeta},
	\end{equation}
	noting that we have chosen to use $\iota = c/2$ from comparison with section~\ref{sec:fr}. Here we will add the constraint that $\alpha,\beta<-1$, which is required for the Jacobi polynomial induced later to be well defined~\cite{Ismail2005}. Thus, the criterion on energy stability in time for a finite polynomial solution is that
	\begin{equation}\label{eq:weighted_sobolev_decay}
		\dx{}{t}\|\hud\|^2_{W_2^{\iota,w}} = \dx{}{t}\rint{\Bigg(\big(\hud\big)^2 + \iota\bigg(\pxi{p}{\hud}{\zeta}\bigg)^2\Bigg)w_{\alpha,\beta}(\zeta)}{\zeta} \leqslant 0,
	\end{equation}
	where
	\begin{equation*}
		w_{\alpha,\beta}(\zeta) = (1-\zeta)^{\alpha}(1+\zeta)^{\beta}.
	\end{equation*}
	For brevity we will define the average as
	\begin{equation}\label{eq:wbar}
		 \overline{w}_{\alpha,\beta} = \frac{1}{2}\rint{w_{\alpha,\beta}(\zeta)}{\zeta}.
	\end{equation}
	
	Let us now consider applying FR to a linear advection problem, such that stability can be investigated in this weighted norm.  Without loss of generality we shall assume a unit convection velocity such that $f(u) = u$.  It follows that
\begin{equation}\label{eq:linad_fr}
	\dx{\hud}{t} = - \px{\hud}{\zeta} - (\huI_L-\hud_L)\dx{h_L}{\zeta} - (\huI_R-\hud_R)\dx{h_R}{\zeta}.
\end{equation}
	Previously, for the proof of OSFR, Eq.~(\ref{eq:linad_fr}) would be multiplied by $\hud$ and integrated over the reference domain. However, as we want to use a weight function, we shall defer this integration step, as this simplifies the use of the product rule.  Multiplying Eq.~(\ref{eq:linad_fr}) by $\hud$ we obtain
\begin{equation*}
	\hud\dx{\hud}{t} = - \hud\px{\hud}{\zeta} - (\huI_L-\hud_L)\hud\dx{h_L}{\zeta} - (\huI_R-\hud_R)\hud\dx{h_R}{\zeta},
\end{equation*}
for which the product rule can be used to get
\begin{equation*}
	\begin{split}
		\frac{1}{2}\dx{(\hud)^2}{t} = - \frac{1}{2}\px{(\hud)^2}{\zeta} &- (\huI_L-\hud_L)\bigg(\px{h_L\hud}{\zeta} - h_L\dx{\hud}{\zeta}\bigg)\\	&- (\huI_R-\hud_R)\bigg(\px{h_R\hud}{\zeta} - h_R\dx{\hud}{\zeta}\bigg).
	\end{split}
\end{equation*}
This step is important as it allows for the formation of the conserved variable at the interface as well ensuring that only derivatives of $\hud$ are present.  We may now proceed to multiply by the weight function and integrate over the reference domain as
\begin{equation}\label{eq:u_wsn}
	\begin{split}
		\frac{1}{2}\dx{}{t}\rint{\big(\hud\big)^2w_{\alpha,\beta}}{\zeta} = &- \frac{1}{2}\rint{\px{(\hud)^2}{\zeta}w_{\alpha,\beta}}{\zeta}\\
	 	&- (\huI_L-\hud_L)\rint{\bigg(\px{h_L\hud}{\zeta} - h_L\dx{\hud}{\zeta}\bigg)w_{\alpha,\beta}}{\zeta}\\
	 	&- (\huI_r-\hud_R)\rint{\bigg(\px{h_R\hud}{\zeta} - h_R\dx{\hud}{\zeta}\bigg)w_{\alpha,\beta}}{\zeta}.
	 \end{split}
\end{equation}
Now proceeding to form the second component of the weighted Sobolev norm we first take the $p^{\mathrm{th}}$ spatial derivative of Eq.~(\ref{eq:linad_fr})
\begin{equation}\label{eq:linad_pth}
	\dx{}{t}\pxi{p}{\hud}{\zeta} = - \pxi{p+1}{\hud}{\zeta} - (\huI_L-\hud_L)\dxi{p+1}{h_L}{\zeta} - (\huI_R-\hud_R)\dxi{p+1}{h_R}{\zeta}.
\end{equation}
Given  $\hud$ is a degree $p$ polynomial, Eq.~(\ref{eq:linad_pth}) may be multiplied by the $p^{\mathrm{th}}$ derivative of $\hud$ and integrated over the reference domain with the weighting function to give
\begin{equation}\label{eq:up_wsn}
	\begin{split}
		\frac{1}{2}\dx{}{t}\rint{\Bigg(\pxi{p}{\hud}{\zeta}\Bigg)^2w_{\alpha,\beta}}{\zeta} =
		&- 2(\huI_L-\hud_L)\pxi{p}{\hud}{\zeta}\dxi{p+1}{h_L}{\zeta}\overline{w}_{\alpha,\beta}\\
		&- 2(\huI_R-\hud_R)\pxi{p}{\hud}{\zeta}\dxi{p+1}{h_R}{\zeta}\overline{w}_{\alpha,\beta}.
	\end{split}
\end{equation}
This simplification can be made due to the respective orders of $\hud$, $h_L$, and $h_R$. The factor of two on the right-hand side originates from the definition of $\overline{w}_{\alpha,\beta}$, see Eq.~(\ref{eq:wbar}). To now form the complete weighted Sobolev norm, we take Eq.~(\ref{eq:u_wsn}) and add $\iota$ times Eq.~(\ref{eq:up_wsn}) as
\begin{equation*}
	\begin{split}
		\frac{1}{2}\dx{}{t}\rint{\Bigg(\big(\hud\big)^2 + \iota\bigg(\pxi{p}{\hud}{\zeta}\bigg)^2\Bigg)w_{\alpha,\beta}}{\zeta} = &- \frac{1}{2}\rint{\px{(\hud)^2}{\zeta}w_{\alpha,\beta}}{\zeta}\\
	 	&- (\huI_L-\hud_L)\rint{\bigg(\px{h_L\hud}{\zeta} - h_L\dx{\hud}{\zeta}\bigg)w_{\alpha,\beta}}{\zeta}\\
	 	&- (\huI_r-\hud_R)\rint{\bigg(\px{h_R\hud}{\zeta} - h_R\dx{\hud}{\zeta}\bigg)w_{\alpha,\beta}}{\zeta} \\
	 	&- 2(\huI_L-\hud_L)\iota\pxi{p}{\hud}{\zeta}\dxi{p+1}{h_L}{\zeta}\overline{w}_{\alpha,\beta}\\
		&- 2(\huI_R-\hud_R)\iota\pxi{p}{\hud}{\zeta}\dxi{p+1}{h_R}{\zeta}\overline{w}_{\alpha,\beta}.
	\end{split}
\end{equation*}
Hence, by analogy to Vincent~\etal~\cite{Vincent2010} and Eq.~(\ref{eq:osfr_cond}) in section~\ref{sec:fr}, if the following conditions are imposed on the correction function when $\hud$ is a $p^\mathrm{th}$ order polynomial
	\begin{subequations}\label{eq:h_cond}
		\begin{align}
			\rint{\bigg(h_L\dx{\hud}{\zeta}\bigg)w_{\alpha,\beta}}{\zeta} - \iota\pxi{p}{\hud}{\zeta}\dxi{p+1}{h_L}{\zeta}\rint{w_{\alpha,\beta}\,}{\zeta} &= 0, \label{eq:hl_cond}\\
			\rint{\bigg(h_R\dx{\hud}{\zeta}\bigg)w_{\alpha,\beta}}{\zeta} - \iota\pxi{p}{\hud}{\zeta}\dxi{p+1}{h_R}{\zeta}\rint{w_{\alpha,\beta}\,}{\zeta} &= 0, \label{eq:hr_cond}
		\end{align}
	\end{subequations}
	then
	\begin{equation}\label{eq:fr_norm_w_corr}
	\begin{split}
		\frac{1}{2}\dx{}{t}\rint{\Bigg(\big(\hud\big)^2 + \iota\bigg(\pxi{p}{\hud}{\zeta}\bigg)^2\Bigg)w_{\alpha,\beta}}{\zeta} = &- \frac{1}{2}\rint{\px{(\hud)^2}{\zeta}w_{\alpha,\beta}(\zeta)}{\zeta}\\
	 	&- (\huI_L-\hud_L)\rint{\bigg(\px{h_L\hud}{\zeta}\bigg)w_{\alpha,\beta}\,}{\zeta}\\
	 	&- (\huI_r-\hud_R)\rint{\bigg(\px{h_R\hud}{\zeta}\bigg)w_{\alpha,\beta}\,}{\zeta}.
	\end{split}
\end{equation}
	In order to find correction functions that can meet the conditions of Eq.~(\ref{eq:hl_cond}~\&~\ref{eq:hr_cond}) we need to define the projection of $\hud$ and $h_L$ into the Jacobi polynomial basis
\begin{equation}\label{eq:jacobi_basis}
	\hud = \sum^p_{i=0}\tilde{u}_iJ^{(\alpha,\beta)}_i \quad \mathrm{and} \quad  h_L = \sum^{p+1}_{i=0}\tilde{h}_{L,i}J^{(\alpha,\beta)}_i \quad \mathrm{and} \quad  h_R= \sum^{p+1}_{i=0}\tilde{h}_{R,i}J^{(\alpha,\beta)}_i.
\end{equation}
	Before proceeding, we will lay out some results for Jacobi polynomials that will be used throughout. Firstly, the orthogonality condition
	\begin{equation*}
		\rint{J^{(\alpha,\beta)}_mJ^{(\alpha,\beta)}_nw_{\alpha,\beta}}{\zeta} = \underbrace{\frac{2^{\alpha+\beta+1}}{2n+\alpha+\beta+1}\frac{\Gamma(n+\alpha+1)\Gamma(n+\beta+1)}{n!\Gamma(n+\alpha+\beta+1)}}_{q_n^{(\alpha,\beta)}}\delta_{mn},
	\end{equation*}
	where $\Gamma(x)$ is the gamma function and $\delta_{mn}$ is the Kronecker delta function. Secondly, it will be useful to differentiate a Jacobi polynomial and express the result as a series of Jacobi polynomials in the same basis.  From Doha~\cite{Doha2002} we have
	\begin{equation*}
		\dxi{m}{J^{(\alpha,\beta)}_n}{\zeta} = 2^{-m}\poch{n+\alpha+\beta+1}{m}\sum^{n-m}_{i=0}D_{n-m,i}(\alpha+m,\beta+m,\alpha,\beta)J^{(\alpha,\beta)}_i,
	\end{equation*}
	where
	\begin{equation}\label{eq:jacodi_D}
		\begin{split}
		D_{j,i}(\gamma,\delta,\alpha,\beta) =& \frac{\poch{j+\gamma+\delta+1}{i}\poch{i+\gamma+1}{j-i}\Gamma(i+\alpha+\beta+1)}{(j-i)!\Gamma(2i+\alpha+\beta+1)} \times \\
		&_3F_2\left(\begin{matrix}
		i-j, & j+i+\gamma+\delta+1, & i+\alpha+1 \\
		i+\gamma+1, & 2i+\alpha+\beta+2&\end{matrix};1\right).
		\end{split}
	\end{equation}
	We define here that $\poch{x}{i}$ is the rising Pochhammer function and $_3F_2(\dots;z)$ is the 3-2 generalised hypergeometric function~\cite{Bailey1933,Bateman1953}. We will also define the $p^\mathrm{th}$ derivative of a degree $p$ Jacobi polynomial as
	\begin{equation*}
		b_p^{(\alpha,\beta)} = \dxi{p}{J^{(\alpha,\beta)}_p}{\zeta} = 2^{-p}(p+\alpha+\beta+1)_p.
	\end{equation*}
	With these definitions, we may now substitute Eq.~(\ref{eq:jacobi_basis}) into Eq.~(\ref{eq:hl_cond}) to get the condition
	\begin{equation}\label{eq:jacobi_l_crit}
		\rint{\bigg(\sum^{p+1}_{i=0}\sum^{p}_{j=0}\tilde{h}_{L,i}\tilde{u}_jJ^{(\alpha,\beta)}_i\dx{J^{(\alpha,\beta)}_j}{\zeta}\bigg)w_{\alpha,\beta}}{\zeta}
		- \iota\tilde{u}_p\tilde{h}_{L,p+1}b_p^{(\alpha,\beta)}b_{p+1}^{(\alpha,\beta)}q_{0}^{(\alpha,\beta)} = 0.
	\end{equation}
	To solve this integral condition we may constrain, without loss of generality, $\tilde{h}_L$ to only have terms in $p-1$, $p$ and, $p+1$. We then observe that the only contribution to the equality in Eq.~(\ref{eq:jacobi_l_crit}) is from the $(p-1)^{\mathrm{th}}$ term of $\tilde{h}_L$. Therefore, if we can find a closed form of $D_{p-1,p-1}(\alpha+1,\beta+1,\alpha,\beta)$ we can relate $\tilde{h}_{L,p-1}$ to $\tilde{h}_{L,p+1}$ through $\iota$. Hence, substituting the values into Eq.~(\ref{eq:jacodi_D}) we find that the hypergeometric function component becomes ${_3}F_2(0,\dots;1)$ and, from the definition of the rising Pochhammer function, this must have a value of unity, hence we may write
	\begin{equation*}
		D_{p-1,p-1}(\alpha+1,\beta+1,\alpha,\beta) = \frac{(2p+\alpha+\beta-1)(2p+\alpha+\beta)}{2(p+\alpha+\beta)},
	\end{equation*}
	which leads to
	\begin{equation}\label{eq:iota_hratio}
		\iota = \frac{\tilde{h}_{L,p-1}}{\tilde{h}_{L,p+1}}\Bigg(\underbrace{\frac{(p+\alpha+\beta+1)\poch{p+\alpha+\beta+2}{p-1}q_{p-1}^{(\alpha,\beta)}}{2\poch{p+\alpha+\beta}{p-1}b_{p}^{(\alpha,\beta)}b_{p+1}^{(\alpha,\beta)}q_{0}^{(\alpha,\beta)}}}_{A_p}\Bigg).
	\end{equation}
    Using the fact that $h_L(-1) = 1$ and $h_L(1) = 0$ we find
	\begin{align*}
		(-1)^p &= \tilde{h}_{L,p}\underbrace{\bigg(\frac{\poch{\beta+1}{p}}{p!}\bigg)\bigg[\bigg(\frac{p+\alpha}{p+\beta}\bigg)\frac{\iota p(p+1)+(p+\beta)(p+\beta+1)A_p}{\iota p(p+1)+(p+\alpha)(p+\alpha+1)A_p}+1\bigg]}_{\kappa_p^{(\alpha,\beta)}}, \\
		\tilde{h}_{L,p+1} &= -\frac{A_pJ^{(\alpha,\beta)}_{p}(1)\tilde{h}_{L,p}}{\iota J^{(\alpha,\beta)}_{p-1}(1) + A_pJ^{(\alpha,\beta)}_{p+1}(1)} = -\frac{A_p(p+1)(p+\alpha)\tilde{h}_{L,p}}{\iota p(p+1) + (p+\alpha+1)(p+\alpha)A_p},
	\end{align*}
    repeating for the right and combining these results we obtain
    \begin{subequations}
		\begin{align}
			h_L &= \frac{(-1)^p}{\kappa_p^{(\alpha,\beta)}}\bigg(J^{(\alpha,\beta)}_{p} -\frac{\iota (p+1)(p+\alpha)J^{(\alpha,\beta)}_{p-1} + (p+1)(p+\alpha)A_pJ^{(\alpha,\beta)}_{p+1}}{\iota p(p+1) + (p+\alpha+1)(p+\alpha)A_p} \bigg) \label{eq:hl}, \\
			h_R &= \frac{1}{\kappa_p^{(\beta,\alpha)}}\bigg(J^{(\alpha,\beta)}_{p} +\frac{\iota (p+1)(p+\beta)J^{(\alpha,\beta)}_{p-1} + (p+1)(p+\beta)A_pJ^{(\alpha,\beta)}_{p+1}}{\iota p(p+1) + (p+\beta+1)(p+\beta)A_p} \bigg) \label{eq:hr}.
		\end{align}
	\end{subequations}
	
	We now have definitions for the left and right correction functions which lead to the decay of the weighted Sobolev norm being defined by Eq.~(\ref{eq:fr_norm_w_corr}). However, the right-hand-side of this expression is in terms of the solution and the weight function, which consequently does not permit a strict bounding on the energy decay rate. Instead to further understand the stability of this set of schemes we will turn to semi-discrete and fully-discrete Fourier analysis. 
	
	\begin{remark}
		The possibility of defining Huynh's Lumped Chebyshev--Lobatto~\cite{Huynh2007} in this set of corrections was investigated. It was suspected that this may be possible due to the ability to write LCL correction functions in Chebyshev polynomials. However, due to LCL corrections functions having a zeroth order term, they were found to not be this proposed set of correction functions. 
	\end{remark}
	
	\begin{remark}
		In previous works~\cite{Allaneau2011,Zwanenburg2016} it was found that OSFR can be defined as linearly filtered DG, with $c=0$ showing OSFR being analogous to NDG. The continuation of this property to the weighted schemes defined here was investigated and this was found to not generally hold. This arises due to the weight function and its gradient, the later in the case of OSFR is zero which permits the link to NDG. 
	\end{remark}
\subsection{Spectral Difference Schemes}\label{sec:sd}
	It has been shown~\cite{Huynh2007,Jameson2010,Vincent2010} that FR is able to recover SD schemes~\cite{Kopriva1996,Liu2006,Sun2007} for equation sets with linear and homogeneous flux functions. To obtain an SD scheme within FR the roots in the interior of the reference domain for the left and right corrections functions must be the same, i.e. $h_L(\zeta_z)=h_R(\zeta_z)=0$ for $\zeta_z\in(-1,1)$. 
	
	The simplest method of achieving an SD in FR is by prescribing the interior zeros to be a set of quadrature points. In the first proof of energy stability of SD schemes the canonical Gauss--Legendre quadrature~\cite{Jameson2010} was utilised, but other quadratures have been considered. For example Lui~\etal~\cite{Liu2006} saw the potential for a Gauss--Legendre--Lobatto quadrature. As the choice of quadrature is arbitrary, we proposed that this be extended to the full set of Gauss--Jacobi quadratures, where the $p^{\mathrm{th}}$ Gauss--Jacobi points are the roots of the $p^{\mathrm{th}}$ order Jacobi polynomial~\cite{Kress1998}. These give rise to the correction functions
	\begin{equation}\label{eq:jacobi_sd}
		h_{L,\mathrm{SD}} = \frac{(1-\zeta)}{2}\frac{J^{(\alpha,\beta)}_p(\zeta)}{J^{(\alpha,\beta)}_p(-1)} \quad \mathrm{and} \quad h_{R,\mathrm{SD}} = \frac{(1+\zeta)}{2}\frac{J^{(\alpha,\beta)}_p(\zeta)}{J^{(\alpha,\beta)}_p(1)},
	\end{equation}
	where the correction functions are normalised for the edge value as Jacobi polynomials do not guarantee unit magnitude at $\pm1$.	The recurrence relation for Jacobi polynomials \cite{Beals2016} leads us to the following form
	\begin{subequations}
		\begin{align}
			h_{L,\mathrm{SD}} &= (-1)^p\gamma_p(\beta)\bigg(-a_pJ^{(\alpha,\beta)}_{p-1} + \frac{1}{2}(1 - b_p)J^{(\alpha,\beta)}_p - c_pJ^{(\alpha,\beta)}_{p+1}\bigg), \\
			h_{R,\mathrm{SD}} &= \gamma_p(\alpha)\bigg(a_pJ^{(\alpha,\beta)}_{p-1} + \frac{1}{2}(1 + b_p)J^{(\alpha,\beta)}_p + c_pJ^{(\alpha,\beta)}_{p+1}\bigg).
		\end{align}
	\end{subequations}
	Where
	\begin{subequations}
		\begin{align}
			a_p &=  \frac{(p+\alpha)(p+\beta)}{(2p+\alpha+\beta+1)(2p+\alpha+\beta)}, \\
			b_p & = -\frac{\alpha^2-\beta^2}{(2p+\alpha+\beta+2)(2p+\alpha+\beta)}, \\
			c_p &=  \frac{(p+1)(p+1+\alpha+\beta)}{(2p+\alpha+\beta+1)(2p+\alpha+\beta+2)}, \\
			\gamma_p(x) &= \frac{\Gamma(p+1)\Gamma(x+1)}{\Gamma(p+x+1)}.
		\end{align}
	\end{subequations}
	
	 This is included for completeness, however the focus will be put on the Jacobi SD function of Eq.(\ref{eq:jacobi_sd}). It is also noteworthy that this definition of SD schemes may be extended to include all subsets of Gauss--Jacobi sparse grids by defining the correction function as
	\begin{equation}
		h_L = \frac{(1-\zeta)}{2}\frac{J^{(\alpha,\beta)}_m(\zeta)}{J^{(\alpha,\beta)}_m(-1)} \quad \mathrm{for} \quad |m|_1=p,
	\end{equation}
	where $m=(m_1,m_2\dots)$ is a multi-index and $(\alpha,\beta)$ may vary with $m_i$.

	\begin{figure}[tbhp]
		\centering
			\resizebox{0.7\linewidth}{!}{\savebox\mybox{\begin{tikzpicture}
		\begin{axis}[xtick={-1,-0.5,0,0.5,1},
				xmin=-1,xmax=1,ymin=0,ymax=0.1,
  				style={font=\Large},axis x line=middle,axis y line=none]
				\addplot[color={RdBu-C},mark=*,style={very thick}] coordinates {(-0.86114,0.01) (-0.33998,0.01) (0.33998,0.01) (0.86114,0.01)};	
				\node[] at (axis cs: 0,0.02) {$\alpha=\beta=0$};
		\end{axis}
		\draw [ultra thick, draw=black,draw opacity=0]
       (0,0) -- (0,-5);
	\end{tikzpicture}}

	\begin{tikzpicture}
		\begin{groupplot}[group style = {group size = 2 by 2,
			horizontal sep = 120pt, vertical sep = 0pt}]
			\nextgroupplot[xtick={-1,-0.5,0,0.5,1},
				xmin=-1,xmax=1,ymin=0,ymax=0.1,
				style={font=\Large},axis x line=middle,axis y line=none]
				\addplot[color={RdBu-C},mark=*,style={very thick}] coordinates {(0.13929,0.01) (0.57599,0.01) (0.86945,0.01) (0.99964,0.01)};
				\node[] at (axis cs: 0,0.02) {$\alpha=-1, \beta = 10$};
				
			\nextgroupplot[,xtick={-1,-0.5,0,0.5,1},
				xmin=-1,xmax=1,ymin=0,ymax=0.1,
  				style={font=\Large},axis x line=middle,axis y line=none]
				\addplot[color={RdBu-C},mark=*,style={very thick}] coordinates {(-0.44721,0.01) (-0.14907,0.01) (0.14907,0.01) (0.44721,0.01)};
				\node[] at (axis cs: 0,0.02) {$\alpha=\beta=10$};
			
			\nextgroupplot[xtick={-1,-0.5,0,0.5,1},
				xmin=-1,xmax=1,ymin=0,ymax=0.1,
  				style={font=\Large},axis x line=middle,axis y line=none]
				\addplot[color={RdBu-C},mark=*,style={very thick}] coordinates {(-0.99834,0.01) (-0.44558,0.01) (0.44558,0.01) (0.99834,0.01)};
				\node[] at (axis cs: 0,0.02) {$\alpha=\beta=-1$};
				
			\nextgroupplot[xtick={-1,-0.5,0,0.5,1},
				xmin=-1,xmax=1,ymin=0,ymax=0.1,
  				style={font=\Large},axis x line=middle,axis y line=none]
				\addplot[color={RdBu-C},mark=*,style={very thick}] coordinates {(-0.99964,0.01) (-0.86945,0.01) (-0.57599,0.01) (-0.13929,0.01)};	
				\node[] at (axis cs: 0,0.02) {$\alpha=10, \beta = -1$};
				
		\end{groupplot}
		\node at ($(group c2r1) + (-5.6,-5.5)$) {\usebox\mybox};
	\end{tikzpicture}}
		\caption{Selected Gauss-Jacobi quadratures for $n=4$.}
		\label{fig:gj_quad}
	\end{figure}
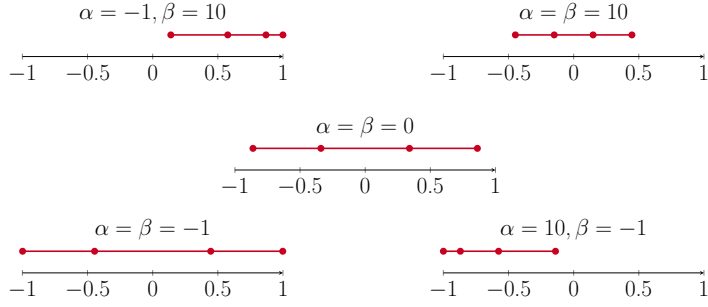
	
	To briefly touch on some characteristics of the Jacobi SD correction functions, consider the Gauss--Jacobi quadratures that make up the interior zeros of the correction functions. Figure~\ref{fig:gj_quad} aims to demonstrate how the quadrature is effected by the variation of $\alpha$ and $\beta$---primarily that for $\alpha=\beta$ the quadrature is symmetric. Furthermore, in the limit $\alpha,\beta\rightarrow-1$ the quadrature gets pinned to the edges and as $\alpha,\beta\rightarrow\infty$ the quadrature gets compressed to the middle. The effect this has for the case when $\alpha\neq\beta$ is that the left and right correction functions are not symmetric, with the zeros biased towards one of the edges.

\subsection{Limits on Stability}
\label{sec:limits}

In order for the correction function to be stable,  Eq.~(\ref{eq:weighted_sobolev_decay}) must be satisfied.  Moreover, the norm described by Eq.~(\ref{eq:weighted_sobolev}) must indeed be a norm; \emph{i.e.} it must be positive, definite, homogeneous, and obey the triangle inequality. Of particular concern to us is the positivity condition; for the remaining conditions follow immediately due to the linear nature of differentiation.  Hence, it is required that
	\begin{equation}\label{eq:norm_positive}
		0 < \|\hud\|^2_{W_2^{\iota,w}} = \rint{\Bigg(\big(\hud\big)^2 + \iota\bigg(\pxi{p}{\hud}{\zeta}\bigg)^2\Bigg)(1-\zeta)^{\alpha}(1+\zeta)^{\beta}}{\zeta} < \infty,
	\end{equation}
	for $\hud\neq0$.
	Therefore, following the method of \cite{Vincent2010} we can substitute Eq.~(\ref{eq:jacobi_basis}) into Eq.~(\ref{eq:norm_positive}) as
	\begin{equation}
		0 < \sum^p_{i=0}\tilde{u}^2_iq_i^{(\alpha,\beta)} + \iota \big(b_p^{(\alpha,\beta)}\big)^2q_0^{(\alpha,\beta)} < \infty.
	\end{equation}
	Then grouping terms of the same order
	\begin{equation}
		0 < \sum^{p-1}_{i=0} \tilde{u}_i^2q_i^{(\alpha,\beta)} + \bigg(q_p^{(\alpha,\beta)}+\iota \big(b_p^{(\alpha,\beta)}\big)^2q_0^{(\alpha,\beta)}\bigg)\tilde{u}_p^2 < \infty.
	\end{equation}
	As $\tilde{u}_i^2$ is always positive, the limit on the value of $\iota$ is
	\begin{equation}\label{eq:i_crit}
		-\!\iota_{\mathrm{crit}}=-\frac{q_p^{(\alpha,\beta)}}{\big(b_p^{(\alpha,\beta)}\big)^2q_0^{(\alpha,\beta)}} \leqslant \iota < \infty.
	\end{equation}
	The value of $\iota_{\mathrm{crit}}$ can be evaluated in a closed form that shows it is always positive. Consider
	\begin{equation}
		\begin{split}
			\iota_{\mathrm{crit}} &= \frac{q_p^{(\alpha,\beta)}}{\big(b_p^{(\alpha,\beta)}\big)^2q_0^{(\alpha,\beta)}} \\
			&= \bigg(\frac{\alpha+\beta+1}{2p+\alpha+\beta+1}\bigg)\bigg(\frac{\poch{\alpha+1}{p}\poch{\beta+1}{p}}{\poch{\alpha+\beta+1}{p}}\bigg)\times \\
			\bigg(\frac{2^p}{\poch{p+\alpha+\beta+1}{p}}\bigg)^2.
		\end{split}
	\end{equation}
	A consequence of this is that the norm is always valid for $\iota=0$, which is of particular interest due to the norms' analogy to that of nodal DG.
	
	To show this definition of $\iota_\mathrm{crit}$ is consistent with previous work, let us consider $\alpha=\beta=0$. Remembering that $\poch{1}{p} = p!$, we get
	\[
		\frac{q_p^{(0,0)}}{\big(b_p^{(0,0)}\big)^2q_0^{(0,0)}} = \frac{(p!)^3}{(2p+1)}\bigg(\frac{2^p}{(2p)!}\bigg)^2,
	\]
	 which, noting that $c = 2\iota$, is equivalent to the result presented in section~\ref{sec:osfr}.
	
	In section~\ref{sec:sd} the set of SD correction function was extended using Gauss--Jacobi quadratures. The advantage of this method of extending the set of SD correction functions is that we may use Eq.~(\ref{eq:i_crit}) in order to understand if SD provides a valid norm. We therefore assert:
	\begin{lemma}{SD Energy Stability.}
		An SD correction function with $\alpha,\beta\in(-1,\infty)$ will always give the $\|\hud\|_{W_2^{\iota,w}}$ to be positive and hence is a valid norm.
	\end{lemma}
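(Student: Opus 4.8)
The plan is to reduce the positivity of $\|\hud\|_{W_2^{\iota,w}}$ to the single scalar inequality already established in Section~\ref{sec:limits}. There it was shown that the weighted Sobolev norm is positive for every nonzero degree-$p$ polynomial $\hud$ precisely when the scheme parameter satisfies $\iota \geqslant -\iota_{\mathrm{crit}}$, where $\iota_{\mathrm{crit}} = q_p^{(\alpha,\beta)}/\big((b_p^{(\alpha,\beta)})^2 q_0^{(\alpha,\beta)}\big)$ was shown in closed form to be strictly positive for all $\alpha,\beta\in(-1,\infty)$. Consequently it suffices to compute the value of $\iota$ to which the SD correction functions of Eq.~(\ref{eq:jacobi_sd}) correspond and to verify that it lies in $[-\iota_{\mathrm{crit}},\infty)$. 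In fact I expect to prove the stronger statement that the SD value of $\iota$ is itself nonnegative, placing it comfortably inside the admissible interval.

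First I would extract the SD value of $\iota$ from relation~(\ref{eq:iota_hratio}), which expresses $\iota$ as $A_p$ times the ratio $\tilde{h}_{L,p-1}/\tilde{h}_{L,p+1}$ of the degree $p-1$ and $p+1$ Jacobi coefficients of $h_L$. Both coefficients can be read directly off the recurrence-relation expansion of $h_{L,\mathrm{SD}}$ given above: they share the common prefactor $(-1)^p\Gamma(p+1)\Gamma(\beta+1)/\Gamma(p+\beta+1)$, which cancels in the ratio, as does the common denominator factor $(2p+\alpha+\beta+1)$. What remains is
\begin{equation}
  \frac{\tilde{h}_{L,p-1}}{\tilde{h}_{L,p+1}} = \frac{(p+\alpha)(p+\beta)(2p+\alpha+\beta+2)}{(p+1)(p+1+\alpha+\beta)(2p+\alpha+\beta)},
\end{equation}
so that the SD value of $\iota$ equals $A_p$ times this quantity.

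The second step is a sign analysis. For $\alpha,\beta\in(-1,\infty)$ and $p\geqslant1$ every factor in the ratio above is strictly positive, since $\alpha,\beta>-1$ forces $p+\alpha>0$, $p+\beta>0$, $2p+\alpha+\beta>0$, and therefore also the remaining terms $2p+\alpha+\beta+2$, $p+1$, and $p+1+\alpha+\beta$ exceed zero. Likewise $A_p$, as defined in~(\ref{eq:iota_hratio}), is a quotient of rising Pochhammer symbols $\poch{x}{p-1}$ with positive base, together with the positive normalisation constants $q_{p-1}^{(\alpha,\beta)}$, $q_0^{(\alpha,\beta)}$, $b_p^{(\alpha,\beta)}$, $b_{p+1}^{(\alpha,\beta)}$ and the positive prefactor $p+\alpha+\beta+1$; hence $A_p>0$. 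It follows that the SD value of $\iota$ is strictly positive, and since $-\iota_{\mathrm{crit}}<0$, the positivity requirement~(\ref{eq:norm_positive}) is satisfied, which proves that $\|\hud\|_{W_2^{\iota,w}}$ is a valid norm for every SD correction function in the stated parameter range.

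The main obstacle is bookkeeping rather than conceptual: one must carefully track the signs of the many Pochhammer and Gamma factors entering $A_p$ and the coefficient ratio and confirm that none vanishes or changes sign across the open range $\alpha,\beta\in(-1,\infty)$. The only delicate points are the lower boundary $\alpha,\beta\to-1$ and the smallest meaningful order $p=1$ (below which there are no interior SD points to prescribe), where one should check that quantities such as $2p+\alpha+\beta$ and $p+1+\alpha+\beta$ remain strictly positive; verifying this confirms that the argument degrades gracefully to the edge of the parameter domain.
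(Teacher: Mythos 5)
Your proposal is correct and takes essentially the same route as the paper: both reduce the lemma to extracting the SD value of $\iota$ from Eq.~(\ref{eq:iota_hratio}) via the ratio $\tilde{h}_{L,p-1}/\tilde{h}_{L,p+1}$ of the SD correction function's Jacobi coefficients, showing this $\iota$ is positive, and invoking the positivity of $\iota_{\mathrm{crit}}$ from the limits-on-stability section. The only difference is that the paper pushes the algebra through to the closed form $\iota_{\mathrm{SD}}/\iota_{\mathrm{crit}} = p/(p+1)$, whereas you stop at a factor-by-factor sign analysis of the coefficient ratio and of $A_p$; either suffices.
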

	\begin{proof}
		Using Eq.(\ref{eq:iota_hratio}) to produce an expression of $\iota$ for SD schemes, which, after normalisation by $\iota_{\mathrm{crit}}$, gives
	\begin{equation}
		\frac{\iota_{\mathrm{SD}}}{\iota_{\mathrm{crit}}} = \frac{p}{p+1}.
	\end{equation}
	As $\iota_{\mathrm{crit}}$ is always positive, so must $\iota_{\mathrm{SD}}$ always be positive.
	\end{proof}
\section{Fourier Analysis}\label{sec:conv}
	In section~\ref{sec:stability}, two conditions were imposed, Eq.~(\ref{eq:h_cond}), which were analogous to those of Vincent~\etal~\cite{Vincent2010}. When the weight function was the unit measure, as it is for OSFR, this led to the norm being solution independent. However, as can be seen from Eq.~(\ref{eq:fr_norm_w_corr}), the norm is not solution independent. The extra conditions on the correction function to remove this dependency leads to an over constraint of the correction function. Therefore, proof of the stability of this set of correction functions has to shown through another means, in particular via Fourier analysis of the linear advection equation.
\subsection{Semi-Discrete}	
		The semi-discrete Fourier analysis of other FR schemes has been performed in several works~\cite{Huynh2007,Vincent2011,Asthana2014,Vermeire2016a} and we begin with the semi-discretised matrix form of FR applied to linear advection
		\begin{equation}\label{eq:semi_disc}
			\px{\mathbf{u}_j}{t} = -\frac{2}{h}\Big(\mathbf{C}_+\mathbf{u}_{j+1} + \mathbf{C}_0\mathbf{u}_j + \mathbf{C}_-\mathbf{u}_{j-1} \Big).
		\end{equation}
		Here we have assumed a uniform mesh with grid spacing $h$.	The operator matrices are defined as
		\begin{subequations}
			\begin{align}
				\mathbf{C}_+ &= (1-\kappa)\mathbf{g}_R\mathbf{l}_L^T, \\
				\mathbf{C}_0 &= \mathbf{D} - \kappa\mathbf{g}_L\mathbf{l}_L^T - (1-\kappa)\mathbf{g}_R\mathbf{l}_R^T, \\
				\mathbf{C}_- &= \kappa\mathbf{g}_L \mathbf{l}_R^T,
			\end{align}
		\end{subequations}
		where $\kappa$ is the upwinding ratio (fully upwinded and central differenced for $\kappa=1$ and $0.5$ respectively) and $\mathbf{D}$ is the differentiation matrix. The array $\mathbf{l_L}$  interpolates from the solution points to the left interface and $\mathbf{l_R}$ is similarly defined. $\mathbf{g_L}$ and $\mathbf{g_R}$ are values of the left and right correction function gradients at the solution points. If we now impose a solution of the form
		\begin{equation}\label{eq:f_soln}
			\mathbf{u}_j = \exp{(i(k\mathbf{x}_j-\omega t))} = \exp{\bigg(ik\bigg[\frac{h}{2}(\pmb{\zeta}+1) + x_j - ct\bigg]\bigg)},
		\end{equation}
		applying this to Eq.~(\ref{eq:semi_disc}) results in
		\begin{equation}
			\px{\mathbf{u}_j}{t} = -\underbrace{\frac{2}{h}\Big(\mathbf{C}_+\exp{(ikh)} + \mathbf{C}_0 + \mathbf{C}_-\exp{(-ikh)}\Big)}_\mathbf{Q}\mathbf{u_j}.
		\end{equation}
		Differentiating $\mathbf{u}_j$ with respect to time we find
		\begin{equation}
			c\mathbf{u}_j = -\frac{2i}{hk}\Big(\mathbf{C}_+\exp{(ikh)} + \mathbf{C}_0 + \mathbf{C}_-\exp{(-ikh)}\Big)\mathbf{u}_j.
		\end{equation}
		This is clearly an eigenvalue problem, where $c$ is the modified phase speed ($\omega/k$) of the scheme and for a given wavenumber, $k$, if $\Im{(c)}\leqslant0$ then the solution will not grow with time. Stipulating that the solution is a plane wave is important as for a periodic or infinite domain, all continuous solutions to the linear advection equation may be written as the superposition of plane waves. Therefore, the necessary and sufficient condition on scheme stability can be written as $\Im{(c)}\leqslant 0 \;\forall\; k\in[0,k_{\mathrm{nq}}]$.
		
	\begin{figure}
		\centering
		\begin{subfigure}[b]{0.48\linewidth}
			\centering
			\includegraphics[width=\linewidth]{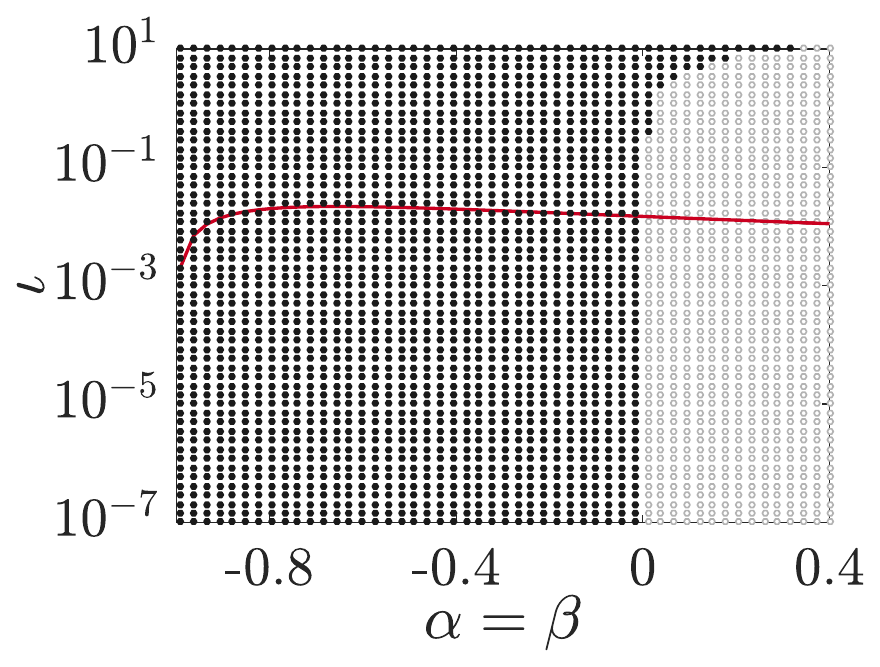}
			\caption{\label{fig:FR2_up_stab}$p=2$.}
		\end{subfigure}
		~
		\begin{subfigure}[b]{0.48\linewidth}
			\centering
			\includegraphics[width=\linewidth]{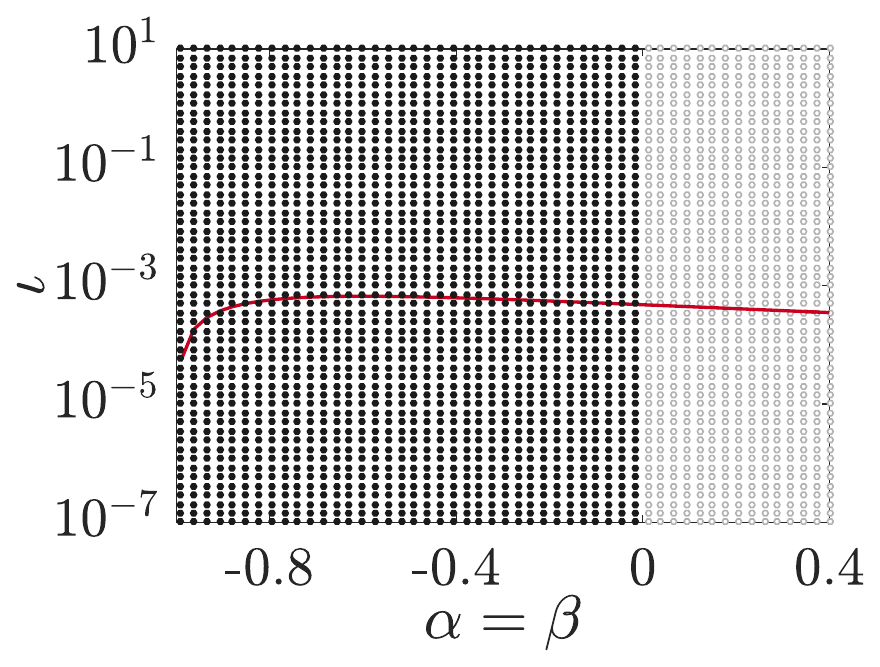}
			\caption{\label{fig:FR3_up_stab}$p=3$.}
		\end{subfigure}
		~
		\begin{subfigure}[b]{0.48\linewidth}
			\centering
			\includegraphics[width=\linewidth]{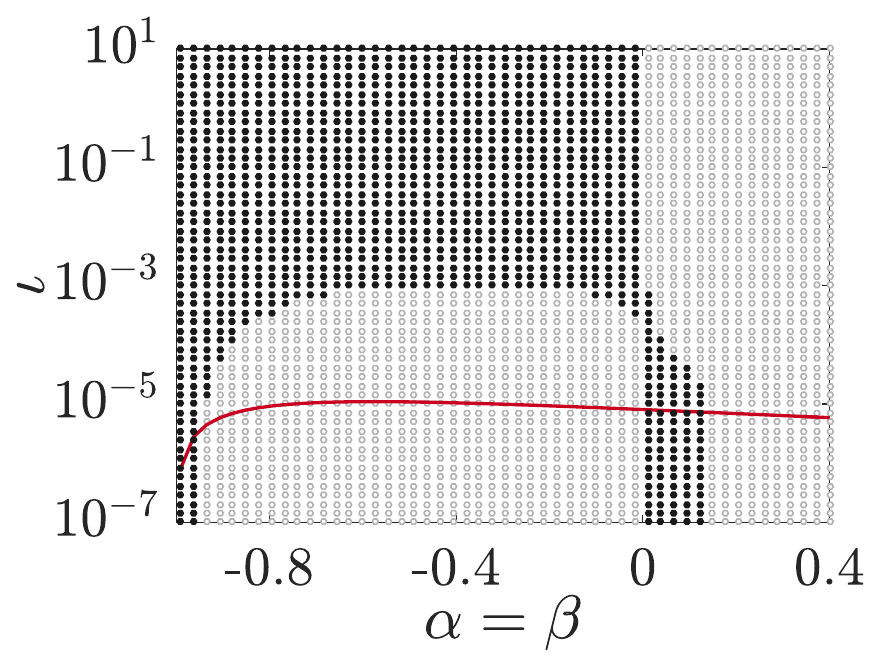}
			\caption{\label{fig:FR4_up_stab}$p=4$}
		\end{subfigure}
		~
		\begin{subfigure}[b]{0.48\linewidth}
			\centering
			\includegraphics[width=\linewidth]{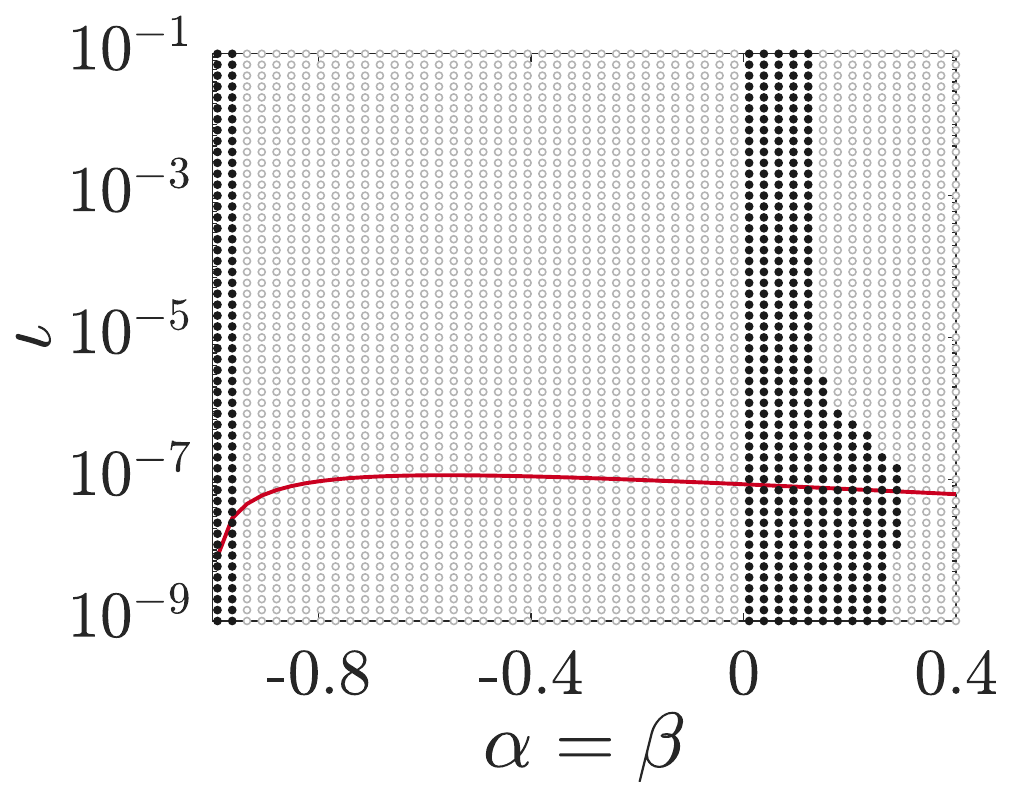}
			\caption{\label{fig:FR5_up_stab}$p=5$.}
		\end{subfigure}
		\caption{Regions of stable correction functions for $\alpha=\beta$. Filled black dots indicate stable correction functions, and the line indicates $\iota_\mathrm{SD}$.}
		\label{fig:FR_up_stab}
	\end{figure}
			
	The results of an investigation into the semi-discrete stability of the proposed schemes is shown in Fig.~\ref{fig:FR_up_stab}. Here symmetric correction functions were tested, $\alpha=\beta$, for various values of $\iota$, and filled circle in Fig.~\ref{fig:FR_up_stab} shows that the scheme was found to satisfy the semi-discrete stability condition. In particular, for all the $\iota$ values tested at $p\in\{2,3\}$ the schemes were stable for $\alpha=\beta\leqslant 0$. While, for $p\in\{4,5\}$ the stable region is reduced, the reason for which is not currently understood. Furthermore, these figures show schemes which are stable for both upwind and centrally differenced interfaces, but for $p\in\{2,3\}$ all schemes tested were stable for centrally differenced interfaces. However, $p\in\{4,5\}$ showed many schemes were stable, with some exception that did not seem to fit in a clearly defined region in the $\iota-\alpha$ plane.
	
	We now wish to understand the semi-discrete rate of convergence of the methods. Returning to the semi-discretised form of the linear advection equation for plane waves with wavenumber $k$ and unit wave speed, 
	\begin{equation}
		\px{\bud_j}{t} = -\mathbf{Q}(k)\bud_j.
	\end{equation}
	The FR operator matrix may now be diagonalised
	\begin{equation}
		\mathbf{Q} = ik\mathbf{W\Lambda W}^{-1},
	\end{equation}
	where $\mathbf{W}$ is the eigenvector matrix and $\mathbf{\Lambda}=\mathrm{diag}(\lambda_0\dots\lambda_p)$. This diagonalisation is possible in this case so long as $\mathbf{Q}$ is not rank deficient and, with non-singular solution point choice, this implies the basis of $\mathbf{Q}$ is in $\mathbb{P}_p$. We may then form the initial interpolation of the solution as
	\begin{equation}
		\bud_j(t=0) = \exp{(ikx_j)}\mathbf{Wv}_0 = \exp{\Big(ik\big(J_j(\zeta+1) +x_j\big)\Big)}.
	\end{equation}
	To calculate the rate of convergence, we need to monitor the semi-discrete error of the solution for different grid spacings. The derivation of the analytical semi-discrete and fully-discrete error can be found from Asthana~\etal~\cite{Asthana2017} and Trojak~\etal~\cite{Trojak2018c} and, as the derivation is secondary to the aim, here we will jump to the end result for the semi-discrete error
	\begin{equation}
		\mathbf{e}_j(t,J) = \bud_j(t) - \mathbf{u}_j(t) = \exp{(ik(x_j-t))} \sum^p_{n=0}\bigg(\exp{(ikt(\lambda_n+1))}-1\bigg)\mathbf{v}_{0,n}\mathbf{w}_n.
	\end{equation}
	Here $\mathbf{w}_n$ is the $n^{\mathrm{th}}$ column vector of $\mathbf{W}$. If we define the $\ell_2$ norm of the error as $E_j(t,J) = \|\mathbf{e}(t,J)_j\|_2$ then the grid convergence rate is
	\begin{equation}
		r_h(t) = \frac{\log{(E_j(t,J_1))}-\log{(E_j(t,J_2))}}{\log{J_1}-\log{J_2}},
	\end{equation}
	where $J_1$ and $J_2$ are the Jacobians of the two grids over which the convergence rate is to be calculated.

	\begin{figure}
		\centering
		\begin{subfigure}[b]{0.48\linewidth}
			\centering
			\includegraphics[width=\linewidth]{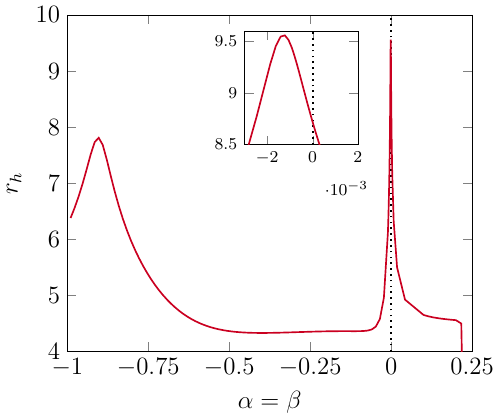}
			\caption{Upwinded interfaces.}
			\label{fig:FRJDG4_conv_up_ab}
		\end{subfigure}
		~
		\begin{subfigure}[b]{0.48\linewidth}
			\centering
			\includegraphics[width=\linewidth]{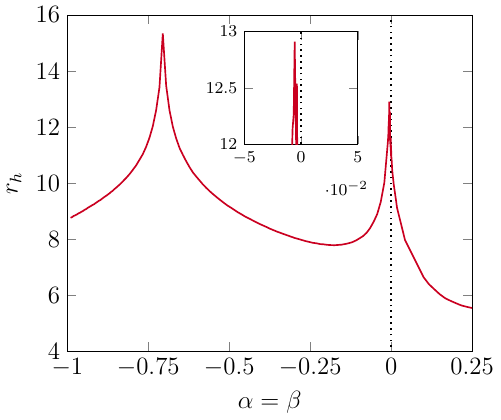}
			\caption{Central differenced interfaces.}
			\label{fig:FRJDG4_conv_cd_ab}
		\end{subfigure}
		\caption{Variation of the rate of error convergence with grid for quasi-DG correction functions,  ($\iota=0$) when, $p=4$, $\alpha = \beta$, $J_2/J_1 = 0.5$, $k=3\pi/4$ and, $t/T=1000$. The dotted line is for $\alpha=\beta=0$.}
		\label{fig:FRJDG4_conv_ab}
	\end{figure}

	\begin{figure}[tbhp]
		\centering
		\begin{subfigure}[b]{0.485\linewidth}
			\centering
			\includegraphics[width=\linewidth]{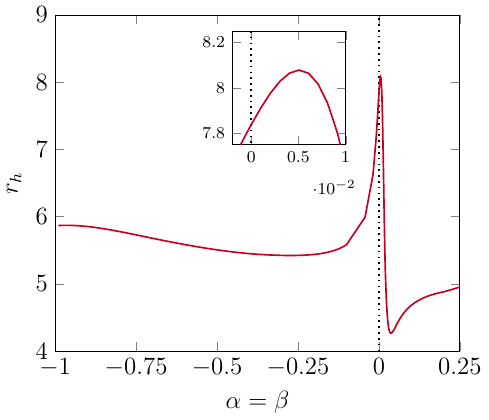}
			\caption{Upwinded interfaces.}
			\label{fig:FRJSD4_conv_up_ab}
		\end{subfigure}
		~
		\begin{subfigure}[b]{0.485\linewidth}
			\centering
			\includegraphics[width=\linewidth]{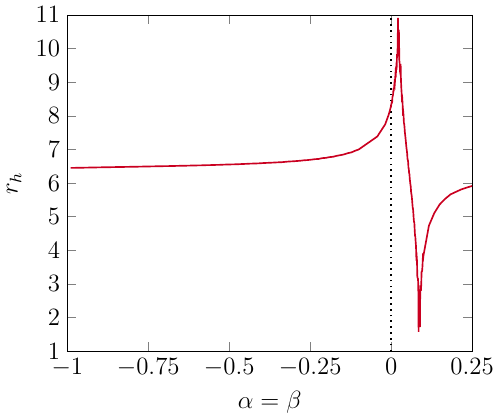}
			\caption{Central differenced interfaces.}
			\label{fig:FRJSD4_conv_cd_ab}
		\end{subfigure}
		\caption{Variation of the rate of error convergence with grid for SD correction functions when, $p=4$, $\alpha = \beta$, $J_2/J_1 = 0.5$, $k=\pi/2$ and, $t/T=1000$. The dotted line is for $\alpha=\beta=0$.}
		\label{fig:FRJSD4_conv_ab}
	\end{figure}
	
	The property that sets the DG correction functions apart from other correction functions is that it achieves super-convergence of the $\ell_2$ error at the solution points~\cite{Huynh2007,Asthana2017,Cockburn1999,Adjerid2002,Witherden2014} of order $2p+1$, \emph{i.e.} for sufficiently smooth and well resolved specific cases, the rate of error convergence can be of order $2p+1$, whereas other FR schemes would normally be expected to obtain less.

	On account of this feature, we will initially look at the rate of convergence of the qDG, with a focus on the case when $\alpha=\beta$, \emph{i.e.} symmetric weight functions. There are two rates of convergence that can be considered, the initial rate as $t\rightarrow0$ and the long time rate as $t\rightarrow\infty$, with the initial rate being dictated by the secondary modes and the long time rate by the primary mode~\cite{Trojak2018c,Asthana2017}. Due to the secondary modes having very short half-lives at a well-resolved wavenumbers~\cite{Trojak2018c}, we consider the more important convergence rate to be $r_h(t) \rightarrow \infty$, and this is shown in Figure~\ref{fig:FRJDG4_conv_ab}. It is clear that for both centrally differenced and upwinded interfaces there is a sharp drop off in the rate of convergence as $\alpha=\beta$ deviates from $0$. However there is a small region around zero where the rate of convergence is maintained. This indicates that correction function may be found that can improve the dispersion/dissipation properties of the method while also maintaining the rate of convergence. The uncharacteristically high rate of convergence at $\alpha=\beta\approx-0.7$ is found for a scheme that was marginally unstable and is disregarded.

	\begin{figure}[tbhp]
		\centering
		\begin{subfigure}[b]{0.485\linewidth}
			\centering
			\includegraphics[width=\linewidth]{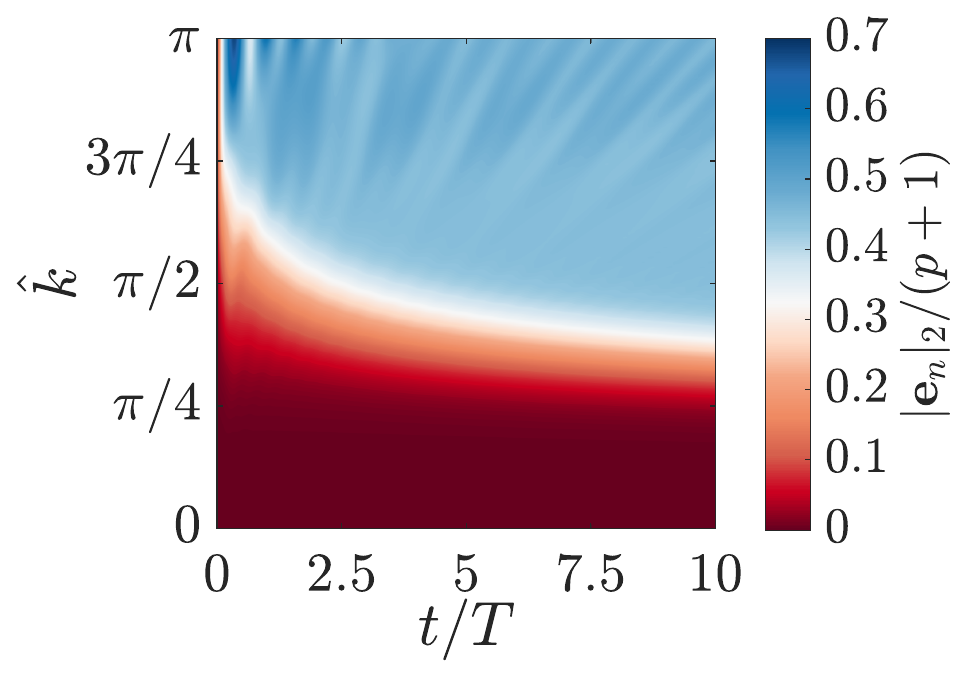}
			\caption{Upwinded interfaces \\ $\alpha=\beta=5\times10^{-3}$}
			\label{fig:FRJSD4_error_up}
		\end{subfigure}
		~
		\begin{subfigure}[b]{0.487\linewidth}
			\centering
			\includegraphics[width=\linewidth]{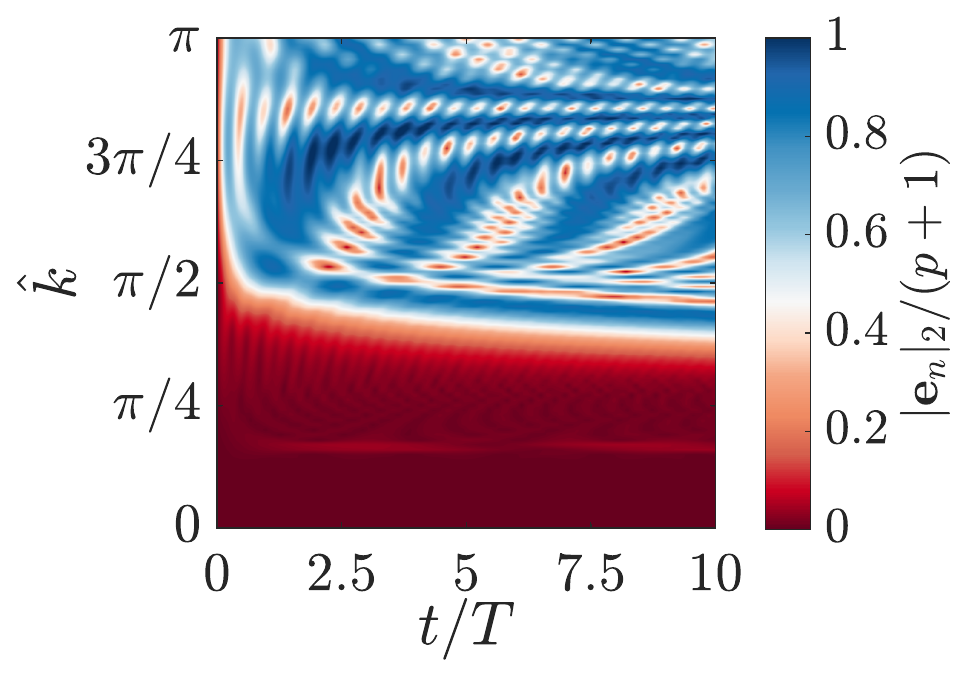}
			\caption{Central differenced interfaces \\ $\alpha=\beta=5\times10^{-2}$}
			\label{fig:FRJSD4_error_cd}
		\end{subfigure}
		\caption{Variation of error with time and normalised wavenumber ($\hat{k}=k/(p+1)$) for SD correction functions with optimal convergence from Figure~\ref{fig:FRJSD4_conv_ab}, $p=4$, $\Delta_x = 1$.}
		\label{fig:FRJSD4_opt_error}
	\end{figure}

	The investigation of convergence rate was repeated for symmetric SD correction functions and the results are shown in Fig.~\ref{fig:FRJSD4_conv_ab}. Again  a sharp drop-off in the rate is seen around $\alpha=\beta=0$, but in the case of centrally differenced interface, a large increase in the rate of convergence is seen prior to this drop off. The exact reason for this large increase is unknown, but Fig.~\ref{fig:FRJSD4_opt_error} makes it clear that this rate does appear to be serendipitous as the error at $k=\pi/2$---equivalent to $\hat{k}=\pi/10$---is low and monotonic. Therefore, an SD correction function has been found that has improved dispersion and dissipation characteristics, while maintaining theoretical super-convergence.

	
	\begin{figure}[tbhp]
		\centering
		\begin{subfigure}[b]{0.48\linewidth}
			\centering
			\includegraphics[width=\linewidth]{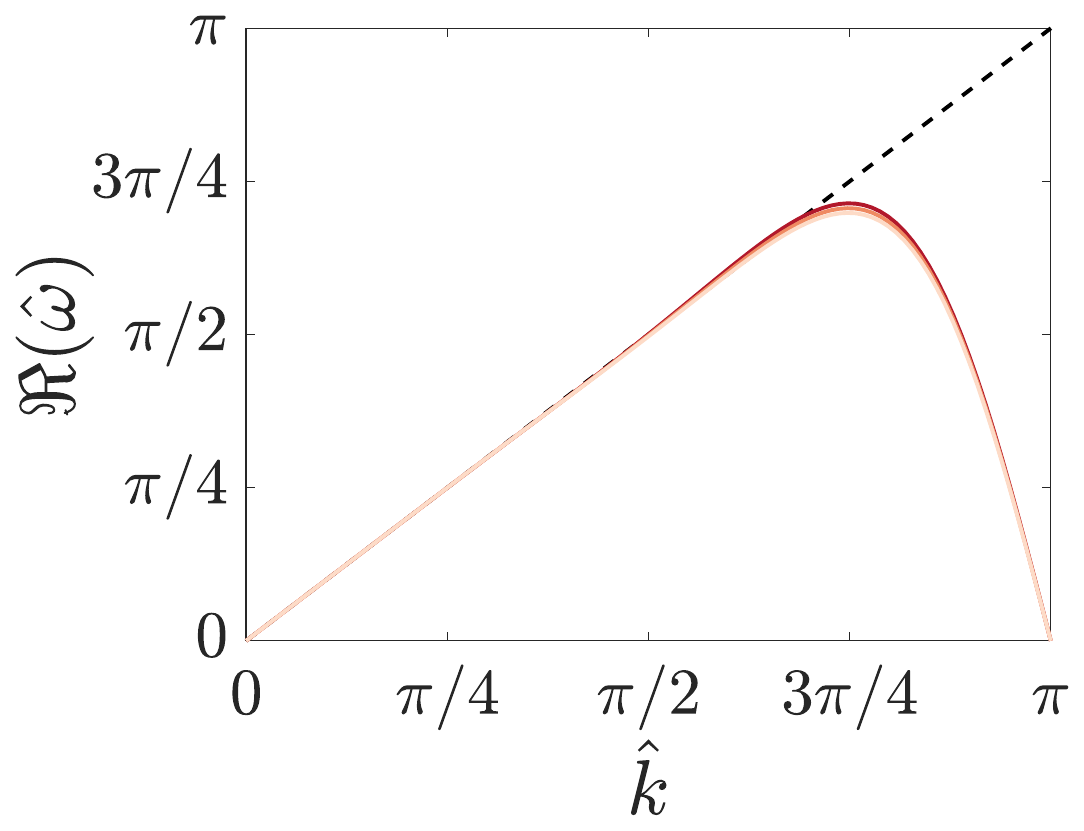}
			\caption{Dispersion}
			\label{fig:FR4JSD_R}
		\end{subfigure}
		~
		\begin{subfigure}[b]{0.48\linewidth}
			\centering
			\includegraphics[width=\linewidth]{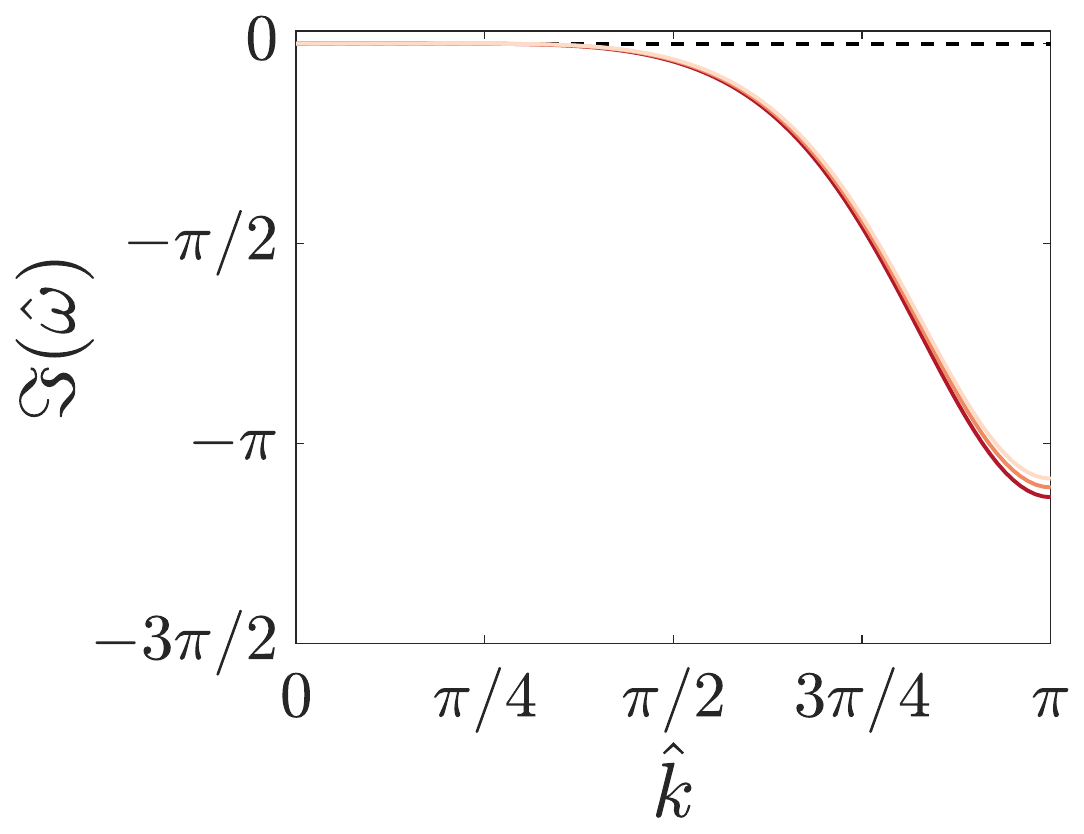}
			\caption{Dissipation}
			\label{fig:FR4JSD_I}
		\end{subfigure}
		~
		\begin{subfigure}[b]{0.27\linewidth}
			\centering
			\includegraphics[width=\linewidth]{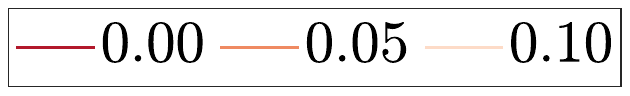}
		\end{subfigure}
		\caption{Jacobi SD dispersion/dissipation plots for $p=4$ with various values of $\alpha=\beta$. }
		\label{fig:FR4JSD_dd}
	\end{figure}

	To gain further insight into the effect of these weighted correction functions, we have plotted the dispersion and dissipation relation as $\alpha=\beta$ is varied, Fig.~\ref{fig:FR4JSD_dd}. Here the increase in $\alpha=\beta$ leads to both a reduction in the dispersion overshoot and a reduction in dissipation. Plotting the form of the correction functions as $\alpha=\beta$ was varied, Fig.~\ref{fig:FRJSDp4_corrs}, shows that increasing the $\alpha=\beta$ reduces the magnitude of the correction function and its gradient within the reference domain. This is consistent with  some of the ideas of Huynh~\cite{Huynh2007}, where smoother interior correction functions were postulated to be more stable. When the weight function associated with this change is considered, this is equivalent to weighting the domain interior more than the edges.
	
	\begin{figure}
		\centering
		\begin{subfigure}[b]{0.48\linewidth}
			\centering
				\resizebox{\linewidth}{!}{	\begin{tikzpicture}
		\pgfplotsset{
 			cycle list={
        		{RdBu-A,thick},
        		{RdBu-B,thick},
        		{RdBu-C,thick},
        		{RdBu-D,thick},
        		{RdBu-E,thick},
        		{RdBu-F,thick},
        		{RdBu-G,thick},
        		{RdBu-H,thick},
        		{RdBu-I,thick},
        		{RdBu-J,thick},
        		{RdBu-K,thick},
        		{RdBu-L,thick},
        		{RdBu-M,thick},
    		}
		}
		
		\begin{axis}[name=plot1,xlabel={$\zeta$},ylabel={$h_L$},
		    xtick={-1,-0.5,...,1},ytick={-4,-2,...,4},
		    xmin=-1,xmax=1,
		    ylabel style={rotate=-90},
    		y tick label style={
        		/pgf/number format/.cd,
            	fixed,
            	fixed zerofill,
            	precision=1,
        	/tikz/.cd
    		},
    		ymin=-4,ymax=4,
    		style={font=\large}]
    			
			\foreach \column in {1,...,13}{
				\addplot+[] table[x index={0},y index={\column},col sep=comma,unbounded coords=jump] {./Figs/data/FRJSDp4_hl_negative.csv};
			}
			\node (source) at (axis cs:0.6,0.75){$0$};
       		\node (destination) at (axis cs:0.7,-3){$-1$};
       		\draw[very thick,->,>=stealth](source)--(destination);
			
		\end{axis} 		
	\end{tikzpicture}}
			\caption{$\alpha=\beta \in (-1,0]$.}
			\label{fig:JSDp4_neg}
		\end{subfigure}
		~
		\begin{subfigure}[b]{0.48\linewidth}
			\centering
				\resizebox{\linewidth}{!}{	\begin{tikzpicture}
		\pgfplotsset{
 			cycle list={
        		{RdBu-A,thick},
        		{RdBu-B,thick},
        		{RdBu-C,thick},
        		{RdBu-D,thick},
        		{RdBu-E,thick},
        		{RdBu-F,thick},
        		{RdBu-G,thick},
        		{RdBu-H,thick},
        		{RdBu-I,thick},
        		{RdBu-J,thick},
        		{RdBu-K,thick},
        		{RdBu-L,thick},
        		{RdBu-M,thick},
    		}
		}
		
		\begin{axis}[name=plot1,xlabel={$\zeta$},ylabel={$h_L$},
		    xtick={-1,-0.5,...,1},ytick={-0.75,-0.5,...,1},
		    xmin=-1,xmax=1,
		    ylabel style={rotate=-90},
    		y tick label style={
        		/pgf/number format/.cd,
            	fixed,
            	fixed zerofill,
            	precision=1,
        	/tikz/.cd
    		},
    		ymin=-0.75,ymax=1.02,
    		style={font=\large}]
    			
			\foreach \column in {1,...,13}{
				\addplot+[] table[x index={0},y index={\column},col sep=comma,unbounded coords=jump] {./Figs/data/FRJSDp4_hl_positive.csv};
			}
			\node[anchor=west] (source) at (axis cs:0.4,0.2){$1$};
       		\node (destination) at (axis cs:0.65,-0.3){$0$};
       		\draw[very thick,<-,>=stealth](source)--(destination);
		\end{axis} 		
	\end{tikzpicture}}
			\caption{$\alpha=\beta \in [0,1]$.}
			\label{fig:JSDp4_pos}
		\end{subfigure}
		\caption{Jacobi SD, $p=4$, left correction functions $\alpha=\beta$ at various values.}
		\label{fig:FRJSDp4_corrs}
	\end{figure}
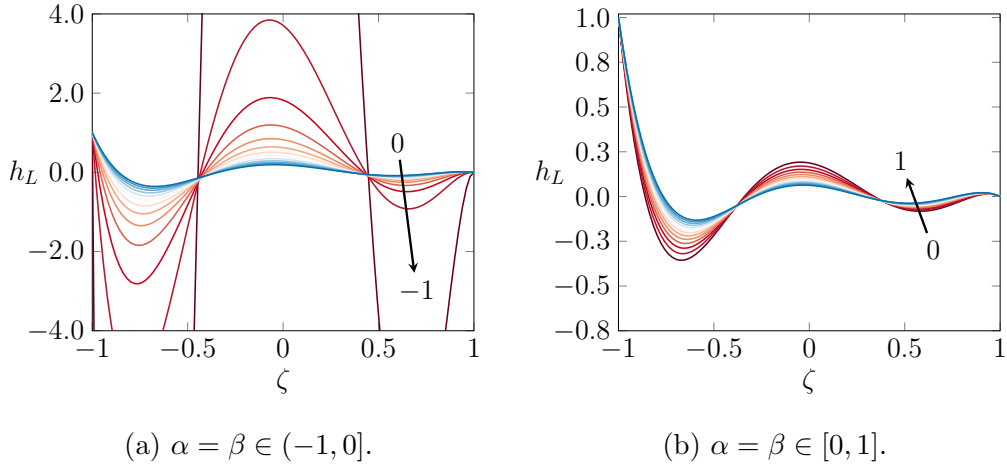

\subsection{Fully-Discrete}

	The second method used to understand these new correction functions is analysis of the explicit temporal integration stability. The importance of this is that, as is often the case in numerical methods, there is a trade off between spatial accuracy and temporal stability. We wish to know if a compromise can be found or if a scheme exists that breaks this trade off, being both accurate and temporally stable. To investigate the maximum stable time step we will use von Neumann's stability conditions~\cite{Isaacson1994} on the fully discretised form
	\begin{equation}
		\mathbf{u}^{\delta,n+1}_j = \mathbf{R}(\mathbf{Q}(k))\mathbf{u}^{\delta,n}_j,
	\end{equation}
	where $\mathbf{R}$ is the update matrix that advances the solution from the $n^{\mathrm{th}}$ time level to the $n+1^{\mathrm{th}}$ level. The definition of the update matrix varies with explicit integration method. As an example consider, forward Euler $\mathbf{R} = \mathbf{I} -\tau\mathbf{Q}$, where $\tau$ is the explicit time step and $\mathbf{I}$ is the identity matrix. Therefore, von Neumann's theorem states that for stability the spectral radius must be less than unity $\rho(\mathbf{R})\leqslant 1$ or, such that, from Banach's fixed point theorem~\cite{Brezis2010}, the update matrix must cause a contraction.

	\begin{figure}[tbhp]
		\centering
			\resizebox{0.6\linewidth}{!}{	\begin{tikzpicture}
		\begin{axis}[name=plot1,
			xlabel={$\alpha=\beta$},
			xtick={-1,-0.5,0,0.5},
			xmin=-1,xmax=0.5,
    			ylabel={$\mathrm{CFL}_\mathrm{max}$},
			ytick={0,0.05,0.1,0.15,0.2},
			ymin=0,ymax=0.2,
			y tick label style={
				/pgf/number format/.cd,
				fixed,
            			fixed zerofill,
            			precision=2,
        			/tikz/.cd
    			},
    			legend style={at={(0.025,0.97)},anchor=north west,font=\scriptsize},
    			style={font=\normalsize}]			
			
			\addplot[color={RdBu-C}, style={thick}]
				table[x=ab,y=cfl,col sep=comma,unbounded coords=jump]{./Figs/data/FRJSDp4_RK44_CFL_ab.csv};
			\addlegendentry{JSD RK44}
			\addplot[color={RdBu-F}, style={thick}]
				table[x=ab,y=cfl,col sep=comma,unbounded coords=jump]{./Figs/data/FRJSDp4_RK33_CFL_ab.csv};
			\addlegendentry{JSD RK33}
			\addplot[color={RdBu-J}, style={thick}]
				table[x=ab,y=cfl,col sep=comma,unbounded coords=jump]{./Figs/data/FRJDGp4_RK33_CFL_ab.csv};
			\addlegendentry{qDG RK33}
			\addplot[color={RdBu-M}, style={thick}]
				table[x=ab,y=cfl,col sep=comma,unbounded coords=jump]{./Figs/data/FRJDGp4_RK44_CFL_ab.csv};
			\addlegendentry{qDG RK44}
		\end{axis} 		
	\end{tikzpicture}}
		\caption{$\alpha=\beta \in (-1,0.5]$.}
		\label{fig:FRJ_CFL}
	\end{figure}
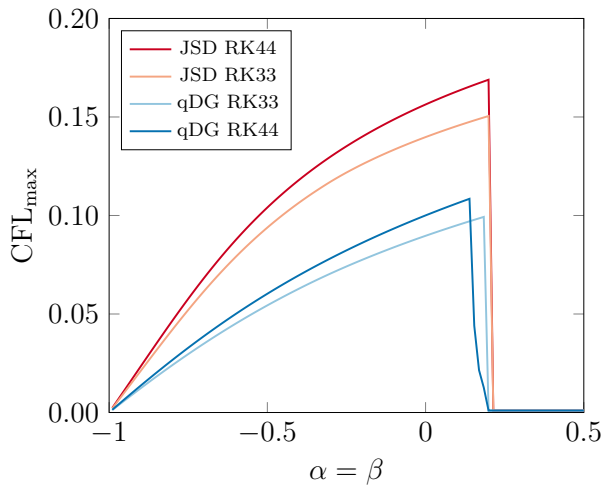

\section{Numerical Experiments}\label{sec:numeric}
	To validate the theoretical finding of the previous sections, several numerical experiments were performed.
	
	\subsection{Heterogeneous Linear Conservation}\label{sec:hlc}
	Initial investigations were for the approximation of a heterogeneous linear partial differential equation. In particular
	\begin{equation}
		\px{u}{t} + (2-\sin{x})\px{u}{x} = 0,
	\end{equation}	
	which may be cast in conservative form as
	\begin{equation}\label{eq:hlc}
		\px{u}{t} + \px{}{x}\Big[(2-\cos{x})u\Big] = u\sin{x}.
	\end{equation}
	Similar governing equations have been used to investigate the properties of FR, such as in the work of \"{O}ffner~\etal~\cite{Offner2019}. For a torus domain, $\mathbf{\Omega} \in [0,2\pi]$, this equation can be shown to be periodic in time~\cite{Trojak2018} with period $T=4\pi/\sqrt{3}$. This property allows for straightforward evaluation of the error. In particular, we investigated the error defined in the broken norm as
	\begin{equation}
		\epsilon^2 = \|u_0 - u_T\|^2_2 = \sum^{N-1}_{n=0}\int_{\mathbf{\Omega}_n}\Big[u^\delta_n(x,0) - u^\delta_n(x,T)\Big]^2\mathrm{d}x.
	\end{equation}
	The approximate value of which was calculated using the a Gauss--Legendre quadrature, facilitated by  the reference solution-points being the Gauss--Legendre nodes. The initial condition was set as 
	\begin{equation}
		u(x,0) = v + \exp{\bigg(-\frac{(x-\pi)^2}{\sigma^2}\bigg)}, \quad \mathrm{with} \quad v = 1 \quad \mathrm{and} \quad \sigma = 0.1.
	\end{equation}
	The common interface flux was set to either fully upwinded (UW) or centrally differenced (CD). Low storage RK4 explicit time stepping was used. The time step was set by dividing the time period by a fixed number of explicit steps.  
	
	\begin{figure}[tbhp]
		\centering
			\resizebox{0.6\linewidth}{!}{\begin{tikzpicture}
	\begin{axis}[
		xlabel={$\alpha=\beta$},
		xmin=-0.4,xmax=0.4,
		xtick={-0.4,-0.2,...,0.4},
		ylabel={$\epsilon$},
 		ylabel style={rotate=-90},
 		ymin=0,ymax=4e-3,
		style={font=\normalsize},		
 		legend style={draw=white!15!black,legend cell align=left,	
 		font=\footnotesize,at={(0.65,0.825)},anchor=west}]

		\addplot[color={RdBu-C}, style={very thick}]
			table[x index={1},y index={0}]{./Figs/data/bump_FRqDGp4_cd.dat};
		\addlegendentry{qDG CD}
		
		\addplot[color={RdBu-F}, style={very thick}]
			table[x index={1},y index={0}]{./Figs/data/bump_FRqDGp4_uw.dat};
		\addlegendentry{qDG UW}
		
		\addplot[color={RdBu-J}, style={very thick}]
			table[x index={1},y index={0}]{./Figs/data/bump_FRJSDp4_uw.dat};
		\addlegendentry{JSD UW}
		
		\addplot[color={RdBu-M}, style={very thick}]
			table[x index={1},y index={0}]{./Figs/data/bump_FRJSDp4_cd.dat};
		\addlegendentry{JSD CD}
	\end{axis}
\end{tikzpicture}	}
		\caption{Error comparison after one time period for $p=4$ FR, with $1200$ degrees of freedom and $\Delta t=T/10^4$.}
		\label{fig:hlc}
	\end{figure}
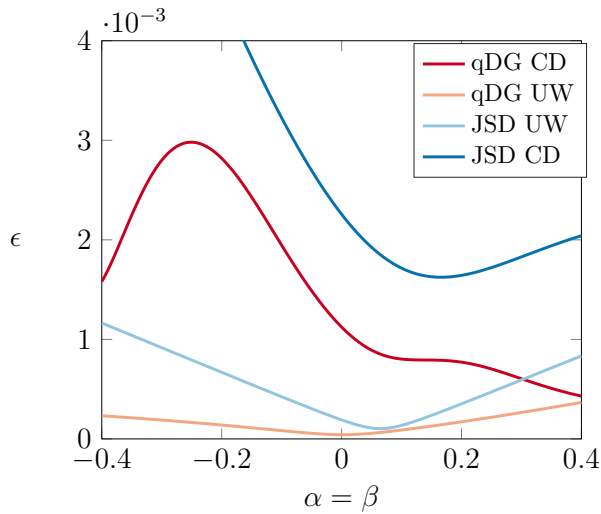
		
	A comparison of the tests performed is displayed in Fig.~\ref{fig:hlc}. The upwind results of Fig.~\ref{fig:hlc} give good agreement with the theoretical results for convergence presented in section~\ref{sec:conv}, $\epsilon$ being convex and having a clear optimum in $0<\alpha=\beta<0.2$. A characteristic of the governing equation, Eq.~(\ref{eq:hlc}), is that it introduces aliasing at the highest frequency, however a feature of unwinding---displayed in Fig.~\ref{fig:FR4JSD_dd}---is that high wavenumbers experience large dissipation. Hence, the heterogeneous equation recovers behaviour similar to that of linear advection. Centrally differenced interfaces also show low error for $0<\alpha=\beta<0.2$. Yet outside of this range the behaviour differs, this is due to the presence of aliasing and the linear system being analytically unstable. These results are indicative of the benefit of weight functions putting importance on the domain interior and the benefit of the smoother correction functions this yields.
	
	\subsection{Burgers' Turbulence}\label{sec:tb}
	Subsequently, this new set of schemes was applied to the 1D turbulent viscous Burgers equation \cite{Love1980}. The 1D viscous Burger's equation is defined as
	\begin{equation}
		\px{u}{x} + \half\px{u^2}{x} = \mu \pxi{2}{u}{x}.
	\end{equation}
	Here $\mu$ is the diffusivity and, following the investigations of San~\cite{San2016} and Alhawwary~\etal~\cite{Alhawwary2018}, it was set as $\mu=2\times10^{-4}$. The solution of this equation, due to the diffusion term, experiences a cascade of energy from the large scales to the small scales~\cite{Love1980}. This energy cascade can subsequently be derived from the closed form of the solution of Hopf~\cite{Hopf1948}. This case is of importance as it can give some indication of the behaviour of the numerical scheme when applied to Navier--Stokes turbulence, due to analogies in the mechanisms.  The energy spectra used to derive the initial velocity field was
	\begin{equation}
		E(k,t=0) = \frac{Ak^4}{k_0^5}\exp{\big(-(k/k_0)^2\big)},
	\end{equation}
with
	\begin{equation}
		A = \frac{2}{3\sqrt{\pi}} \quad \mathrm{and} \quad k_0=10,
	\end{equation}
	on a 1D periodic domain $\mathbf{\Omega} = [0,2\pi]$. This leads to $E(k,0)$ having a maximum value at $k=13$, where $k\in \mathbb{N}$. This initial condition was chosen as it is known to transition to a clear $-k^2$ energy cascade.  Therefore, the velocity field in the spatial domain may be written as:
	\begin{equation}
		u(x) = \sum_{k=0}^{k_\mathrm{max}}\sqrt{2E(k,0)}\cos{(kx+2\pi\Phi(k))} + \overline{u}
	\end{equation}
	where $2\pi\Phi(k)\in (0,2\pi]$ is a random phase angle, $k_\mathrm{max}$ is that maximum wavenumber set here to $2048$, and $\overline{u}$ is the mean velocity. For the recovery of an initial turbulence intensity of $~0.66\%$, $\overline{u}=75$~\cite{Li2016,Alhawwary2018} was set. To maintain a constant temporal filter width with RK44 integration, the initial CFL number ($\overline{u}\Delta t/\Delta x$) was held at $0.057$. This CFL number is equivalent to $\Delta t=2\times10^{-5}$ for $n=1200$, $p=4$, as was used by Alhawwary~\etal~\cite{Alhawwary2018}.

	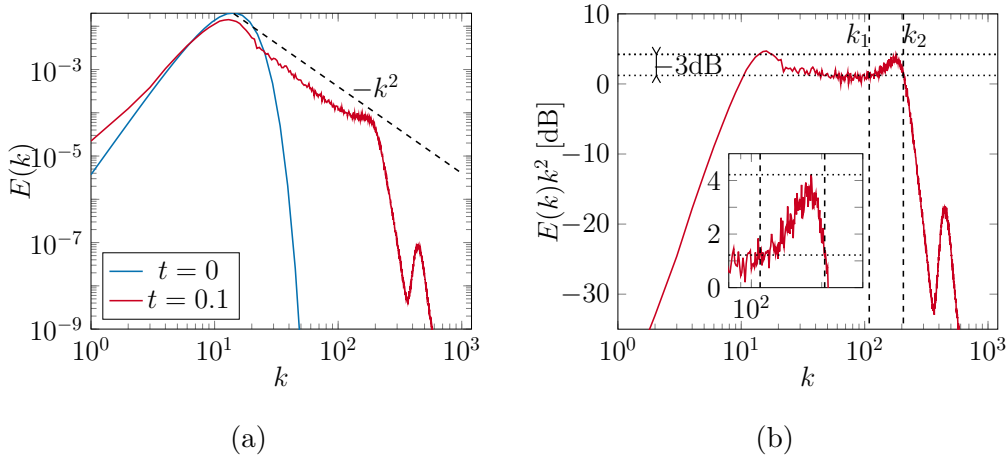
\begin{figure}[tbhp]
		\centering
		\begin{subfigure}[b]{0.48\linewidth}
			\centering
				\resizebox{\linewidth}{!}{\begin{tikzpicture}
	\begin{axis}[name=plot1,ylabel={$E(k)$},xlabel={$k$},
	    xtick={1,10,100,1000},ytick={1e-9,1e-8,1e-7,1e-6,1e-5,1e-4,1e-3,1e-2},
	    yticklabels={$10^{-9}$, ,$10^{-7}$, ,$10^{-5}$, ,$10^{-3}$},
   		xmode=log,
   		xmin=1,xmax=1200,
   		ymode=log,
   		ymin=1e-9,ymax=2e-2,
   		style={font=\large},
   		legend pos=south west]

		\addplot[color={RdBu-M}, style={thick}]
			table[x=k,y=E,col sep=comma,unbounded coords=jump]{./Figs/data/E0.csv};
		\addplot[color={RdBu-C}, style={thick}]
			table[x=f,y=P,col sep=comma,unbounded coords=jump]{./Figs/data/bf240_sd50.csv};
		\addplot[color=black,dashed,style={thick}] coordinates { (10,1e-2*4) (1000,1e-6*4)};
   		\node[] at (axis cs: 200,4e-4) {$-k^2$};
   		\legend{$t=0$,$t=0.1$};
	\end{axis}		
\end{tikzpicture}}
			\caption{}
			\label{fig:sd_spec1}
		\end{subfigure}
		~
		\begin{subfigure}[b]{0.48\linewidth}
			\centering
				\resizebox{\linewidth}{!}{\begin{tikzpicture}
	\begin{axis}[name=plot1,ylabel={$E(k)k^2 \; [\mathrm{dB}]$},xlabel={$k$},
	    xtick={1,10,100,1000},ytick={-40,-30,-20,-10,0,10},
   		xmin=1,xmax=1200,
   		xmode=log,
   		ymin=-35,ymax=10,
   		style={font=\large},
   		after end axis/.code={
            \draw[black,>=angle 60,>-<,semithick] (axis cs:2.05,5.3166) -- (axis cs:2.05,0.1166);
             }]]
             
			\addplot[color=black,dotted,style={thick}] coordinates { (1,4.2166) (1200,4.2166)};
			\addplot[color={RdBu-C}, style={thick}]
				table[x=f,y=Pdb,col sep=comma,unbounded coords=jump]{./Figs/data/bf240_sd50.csv};
			\addplot[color=black,dotted,style={thick}] coordinates { (1,4.2166) (1200,4.2166)};
			\addplot[color=black,dotted,style={thick}] coordinates { (1,1.2166) (1200,1.2166)};
			\addplot[color=black,dashed,style={thick}] coordinates { (206,10) (206,-35)};
			\addplot[color=black,dashed,style={thick}] coordinates { (109,10) (109,-35)};
    		\node[] at (axis cs: 89,7) {$k_1$};
    		\node[] at (axis cs: 256,7) {$k_2$};
    		\node[] at (axis cs: 3.7,2.6) {$-3\mathrm{dB}$};
	\end{axis}
		
	\begin{axis}[name=plot1,
		xtick={1,10,100,1000},ytick={0,1,2,3,4},
		yticklabels={0, ,2, ,4},
    	xmin=80,xmax=300,
    	xmode=log,
    	ymin=0,ymax=5,
    	style={font=\large},
    	height=4cm,width=4cm,
    	shift={(2cm,0.75cm)},axis background/.style={fill=white}]
		
		\addplot[color={RdBu-C}, style={thick}]
			table[x=f,y=Pdb,col sep=comma,unbounded coords=jump]{./Figs/data/bf240_sd50.csv};
		\addplot[color=black,dotted,style={thick}] coordinates { (80,4.2166) (300,4.2166)};
		\addplot[color=black,dotted,style={thick}] coordinates { (80,1.2166) (300,1.2166)};
		\addplot[color=black,dashed,style={thick}] coordinates { (206,5) (206,0)};
		\addplot[color=black,dashed,style={thick}] coordinates { (109,5) (109,0)};
	\end{axis}
\end{tikzpicture} }
			\caption{}
			\label{fig:sd_spec2}
		\end{subfigure}
		\caption{Burgers' turbulence energy spectra of FR, $p=4$, with SD correction functions $\alpha=\beta=0$, DoF = 1200, $\Delta t=2\times10^{-5}$.}
		\label{fig:sd_tb_spectra}
	\end{figure}

	The primary means of evaluating schemes for this case is through the energy spectra and here the spectra was averaged over one hundred runs. Studying Fig.~\ref{fig:sd_spec1}, it can be seen that after $0.1s$ the expected energy cascade has become established. A feature that is commonly seen when numerical solving this equation with ILES is the resonant peak---in this case at $k\approx100$. This peak has previously been explained~\cite{San2016} as a pile-up of energy at the smallest scales due to under-dissipation, however it is more readily explained through bottlenecking and hyper-viscosity~\cite{Frisch2013}. Regardless of the precise mechanism, it is advantageous to reduce the size of this resonant peak when explicit filtering is not used--such as in ILES.

If instead the compensated energy spectra~\cite{Frisch2013} is studied, a clear peak can be seen which is analogous to those seen in mechanical vibrations and control theory. The peak observed was then parametrised using Q-factor~\cite{Green1955} defined as
	\begin{equation}
		Q = \frac{k_0}{k_2-k_1}.
	\end{equation}
	Here $k_0$ is the wavenumber of the resonance peak, and $k_1$ and $k_2$ are the half power wavenumbers i.e. the wavenumbers for the peak $-3\mathrm{dB}$. An example of the modified energy spectra, together with the locations of $k_1$ and $k_2$, is shown in Fig.~\ref{fig:sd_spec2}. The Q-factor of this example is $1.845$. Q-factor is an indicator of the dissipation that is occurring with in the system, with higher Q-factors implying less dissipation and vice versa. A second parameter we will examine is the cut-off wavenumber, which is defined as the wavenumber at which the value of $E(k)k^2$ drops by $3\mathrm{dB}$ from the level of the plateau.

	\begin{figure}[tbhp]
		\centering
		\begin{subfigure}[b]{0.48\linewidth}
			\centering
				\resizebox{\linewidth}{!}{\begin{tikzpicture}
	\begin{axis}[name=plot1,ylabel={$Q$},xlabel={$\alpha=\beta$},
		xtick={-0.4,-0.2,...,0.4},ytick={0,2,...,10},
    	xmin=-0.4,xmax=0.4,
    	ymin=1,ymax=9,
    	style={font=\large},
    	legend pos=north west]
		
		\addplot[only marks,color={RdBu-F},mark=*,mark size=0.5pt]
			table[x=c,y=Q,col sep=comma,unbounded coords=jump]{./Figs/data/sd240.csv};				
		\addplot[only marks,color={RdBu-C},mark=*,mark size=0.5pt]
			table[x=c,y=Q,col sep=comma,unbounded coords=jump]{./Figs/data/sd480.csv};				
		\addplot[only marks,color={RdBu-J},mark=*,mark size=0.5pt]
			table[x=c,y=Q,col sep=comma,unbounded coords=jump]{./Figs/data/dg240.csv};				
		\addplot[only marks,color={RdBu-M},mark=*,mark size=0.5pt]
			table[x=c,y=Q,col sep=comma,unbounded coords=jump]{./Figs/data/dg480.csv};
			
		\addplot[color={RdBu-F}, style={very thick}]
			table[x=x,y=q3,col sep=comma,unbounded coords=jump]{./Figs/data/Q_fit.csv};
		\addplot[color={RdBu-C}, style={very thick}]
			table[x=x,y=q4,col sep=comma,unbounded coords=jump]{./Figs/data/Q_fit.csv};
		\addplot[color={RdBu-J}, style={very thick}]			
			table[x=x,y=q1,col sep=comma,unbounded coords=jump]{./Figs/data/Q_fit.csv};
		\addplot[color={RdBu-M}, style={very thick}]
			table[x=x,y=q2,col sep=comma,unbounded coords=jump]{./Figs/data/Q_fit.csv};
	\end{axis}		
\end{tikzpicture}}			
			\caption{\label{fig:tb_q}Q-factor,}
		\end{subfigure}
		~
		\begin{subfigure}[b]{0.48\linewidth}
			\centering
				\resizebox{\linewidth}{!}{\begin{tikzpicture}
	\begin{axis}[name=plot1,ylabel={$k_{3\mathrm{dB}}$},xlabel={$\alpha=\beta$},
		xtick={-0.4,-0.2,...,0.4},ytick={200,300,...,600},
    	xmin=-0.4,xmax=0.4,
    	ymin=150,ymax=650,
    	style={font=\large},
    	legend pos=north west]
		
		\addplot[color={RdBu-F}, style={very thick}]
			table[x=c,y=f3,col sep=comma,unbounded coords=jump]{./Figs/data/sd240.csv};				
		\addplot[color={RdBu-C}, style={very thick}]
			table[x=c,y=f3,col sep=comma,unbounded coords=jump]{./Figs/data/sd480.csv};				
		\addplot[color={RdBu-J}, style={very thick}]
			table[x=c,y=f3,col sep=comma,unbounded coords=jump]{./Figs/data/dg240.csv};				
		\addplot[color={RdBu-M}, style={very thick}]
			table[x=c,y=f3,col sep=comma,unbounded coords=jump]{./Figs/data/dg480.csv};
	\end{axis}		
\end{tikzpicture}}
			\caption{\label{fig:tb_cutoff}Cut-off wavenumber.}
		\end{subfigure}
		\vspace{1em}
		\begin{subfigure}[b]{0.77\linewidth}
			\centering
			\includegraphics[width=\linewidth]{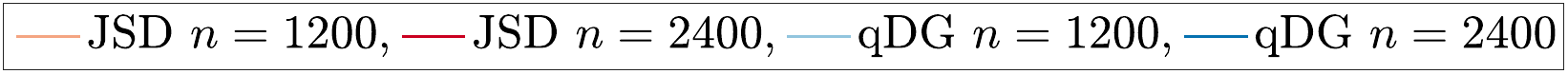}
		\end{subfigure}
		\caption{Burgers' turbulence parametrisation for FR, $p=4$, at $t=0.1$ for various correction functions.}
		\label{fig:GJ_param}
	\end{figure}

	Figure~\ref{fig:GJ_param} displays the Q-factor and cut-off wavenumber for qDG and SD, exploring the behaviour of $\alpha=\beta$ for two different degree of freedom counts;  $n=1200$ and $n=2400$. Studying Figure~\ref{fig:tb_q} there is a region $-0.4<(\alpha=\beta)<\approx 0.1$ where the Q-factor is largely invariant with the correction function. The increase in Q-factor for $\alpha=\beta>\approx0.1$ is consistent with the semi-discrete stability presented in Figure~\ref{fig:FR4_up_stab}, and although the schemes were stable here the large increase in Q-factor is undesirable.
	
	The findings of the cut-off wavenumber are displayed in Fig.~\ref{fig:tb_cutoff}. These show that qDG has a higher cut-off wavenumber than SD and that, within the range tested, the cut-off frequency rose as $\alpha=\beta$ was increased.  These two findings suggest that a good correction function is in the vicinity of $0<\alpha=\beta<0.2$ as these are both analytically stable and provide an increased cut-off wavenumber. This is consistent with the hypothesis that weighting the interior of the reference domain is beneficial and can be further understood through studying the dispersion and dissipation relations in Fig.~\ref{fig:FR4JSD_dd}.

\section{Conclusions}
\label{sec:conclusions}

	A new set of FR correction functions, defined in terms of Jacobi polynomials with stability affirmed through Fourier analysis, was defined using a weighted norm. This family proves the feasibility of incorporating a weight function into the definition of an FR scheme.  Using these functions it is possible to recover, at least for a linear flux function, a range of spectral difference schemes.  Theoretical Fourier analysis studies were then used to show that these new schemes can be used to increase wave resolution, while convergence studies showed that these new schemes can maintain high order convergence with increased temporal stability. The norm that is used to define this set can also permit the definition of a quasi-DG set, defined in a weighted $\ell_2$ norm, but ultimately found to not be the same as DG.  Numerical experiments were then performed to validate the theoretical findings, which showed that the optimal correction functions for a heterogeneous linear PDE and Burgers' turbulence lay approximately in the same region predicted by theoretical investigations. The numerical results are indicative that this correction function family may be of use when compared to existing schemes applied to ILES. A summary of how the present work is positioned relative to the literature is shown in Fig.~\ref{fig:venn_diagram2}. Furthermore, there are some areas addressed by this paper which may be furthered by studying how to pick a suitable correction function, and is it feasible to locally or globally adapt the correction function based on the solution.
	
	\begin{figure}
		\centering
			\resizebox{0.7\linewidth}{!}{\begin{tikzpicture}[every node/.style={font=\small},scale=2.0]
	\def\esfr{(0,0) ellipse (0.75 and 1)}
	\def\osfr{(-0.25,0) ellipse (0.5 and 0.5)}
	\def\gjfr{(-0.5,0) ellipse (1.5 and 0.52)}
	\def\gsfr{(0.75,0) ellipse (1.5 and 1.5)}
	\definecolor{col1}{RGB}{202,0,32}
	\definecolor{col2}{RGB}{244,165,130}
	\definecolor{col3}{RGB}{146,197,222}
	\definecolor{col4}{RGB}{25,113,176}
   
    \begin{scope}[]
        
        \draw \esfr node[label={[xshift=0em, yshift=2em]\small{ESFR}}]{};
        \draw \osfr node[label={[xshift=0em, yshift=-1em]\small{OSFR}}]{};
        \draw \gjfr node[label={[xshift=-3em, yshift=-1em]\small{GJFR}}]{};
        \draw \gsfr node[label={[xshift=4em, yshift=-1em]\small{GSFR}}]{};
        
        \fill[fill=col1] (1.1,0.95) rectangle (1.2,1.05);
        \fill[fill=col2,rotate around={45:(0.25,0)}] (0.2,-0.05) rectangle (0.3,0.05);
        \fill[fill=col3] (-0.5,-0.35) rectangle (-0.4,-0.25);
        \fill[fill=col4] (-0.75,0) circle (0.05);
        
        \fill[fill=col1] (2.4,0.45) rectangle (2.5,0.55);
        \node[anchor=west] at (2.55,0.5) {Huynh LCL};
        
        \fill[fill=col2,rotate around={45:(2.45,0.2)}] (2.4,0.15) rectangle (2.5,0.25);
        \node[anchor=west] at (2.55,0.2) {LSD};
        
        \fill[fill=col3] (2.4,-0.15) rectangle (2.5,-0.05);
        \node[anchor=west] at (2.55,-0.1) {Huynh $g_2$};
        
        \fill[fill=col4] (2.45,-0.4) circle (0.05);
        \node[anchor=west] at (2.55,-0.4) {NDG};
    \end{scope}

\end{tikzpicture}}
		\caption{Euler diagram to show the interconnection of the spaces of FR correction functions: Nodal DG~(NDG)~\cite{Huynh2007}; Original Stable FR~(OSFR)~\cite{Vincent2010}; Extended range Stable FR~(ESFR)~\cite{Vincent2015}; Generalised Sobolev stable FR~(GSFR)~\cite{Trojak2018} and, the Generalised Jacobi stable FR~(GJFR) of the present work. Some specific examples of specific schemes are given, notably Huynh's Lumped Chebyshev-Lobatto~(LCL)~\cite{Huynh2007} scheme and the original Legendre spectral difference (LSD) scheme~\cite{Kopriva1996,Huynh2007,Jameson2010}.}
		\label{fig:venn_diagram2}
	\end{figure}
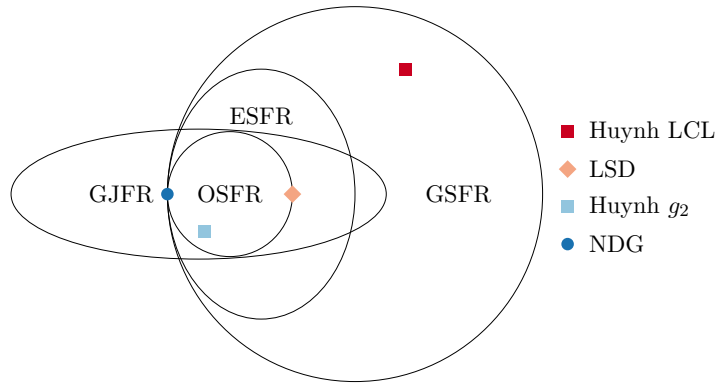

\section*{Acknowledgements}
\label{sec:ack}
	The authors would like to acknowledge Rob Watson for his comments, suggestions, and contributions The support of the Engineering and Physical Sciences Research Council of the United Kingdom is gratefully acknowledged under the award reference 1750012. The authors would also like to thank Nvidia for the GPU Seeding Grant received.

\section*{References}
\bibliographystyle{elsarticle-num}
\bibliography{library}


\clearpage
\begin{appendices}
\section{Nomenclature}
		\begin{tabbing}
		  XXXXXXXX \= \kill
		  	\textit{Roman}\\
		  	$A_p$ \> constant relating the ratio of $\tilde{h}_{L,p-1}$ over $\tilde{h}_{L,p+1}$ and $\iota$\\
			$b_p^{(\alpha,\beta)}$ \> $p^{\mathrm{th}}$ derivative of $p^{\mathrm{th}}$ Jacobi polynomial\\		  	
			$c$ \> OSFR correction function variable ($\iota = c/2$)\\
		  	$D_{i,j}(\gamma,\delta,\alpha,\beta)$ \> Doha's Jacobi differentiation constant\\
		  	$\mathbf{e}_j(t,J)$ \> analytical error at solution points for time $t$ with element Jacobian $J$\\
		  	$e_1(t)$ \> error in $\epsilon_1$ at time t\\
		  	$e_2(t)$ \> error in $\epsilon_2$ at time $t$\\
		  	$h_L$~\&~$h_R$ \> left and right correction functions\\
		  	$J_n$ \> $n^{\mathrm{th}}$ Jacobian of the mapping $x\rightarrow\zeta$ \\
		  	$J_i^{(\alpha,\beta)}$ \> $i^{\mathrm{th}}$ order Jacobi polynomial of the first kind\\
		  	$k$ \> wavenumber\\
		  	$L_n$ \> $n^{\mathrm{th}}$ order Legendre polynomial\\
			$q_n^{(\alpha,\beta)}$ \> value of the $n^{\mathrm{th}}$ Jacobi orthogonality integral\\
			$\mathbf{Q}$ \> FR operator matrix for linear advection equation\\
			$r_h(t)$ \> rate of convergence with cell width at time $t$\\
			$w_{\alpha,\beta}(x)$ \> Jacobi weight function $(1-x)^{\alpha}(1+x)^{\beta}$\\
			$\mathbf{w}_n$ \> $n^{\mathrm{th}}$ column of $\mathbf{W}$\\
			$\mathbf{W}$ \> eignevector matrix of $\mathbf{Q}$\\
			$W_{2}^{c}$ \> Sobolev space in 2 norm, modified by factor $c$\\
			$W_{2}^{\iota,w}$ \> Sobolev space in 2 norm, modified by factor $\iota$ and weighted by $w_{\alpha,\beta}$\\
				
			\\ \textit{Greek}\\
			$\alpha$ \> first Jacobi polynomial control parameter\\
			$\beta$ \> second Jacobi polynomial control parameter\\
			$\gamma$ \> ratio of $\Gamma$ functions\\
			$\Gamma(x)$ \> Gamma function evaluated at $x$\\
			$\zeta$ \> 1D spatial variable in reference domain\\
			$\eta_p$ \> OSFR derived parameter\\
			$\Theta_n$ \> linear transformation of $n^{\mathrm{th}}$ element from $x$ to $\zeta$. $\Theta_n: \mathbf{\Omega}_n\rightarrow\hat{\mathbf{\Omega}}$\\
			$\iota$ \> correction function parameter\\
			$\iota_{\mathrm{crit}}$ \> critical correction function parameter\\
			$\kappa_p^{(\alpha,\beta)}$ \> derived parameter for Jacobi correction functions\\
			$\mathbf{\Lambda}$ \> diagonal eignevalue matrix of $\mathbf{Q}$\\
			$\mu$ \> dynamic viscosity\\
			$\mathbf{\Omega}$ \> spatial domain\\
			$\mathbf{\Omega}_n$ \> $n^{\mathrm{th}}$ partition of the domain $\mathbf{\Omega}$\\

			\\ \textit{Subscript}\\
			$\mathrm{\bullet}_L$ \> variable at left of cell\\
			$\mathrm{\bullet}_R$ \> variable at right of cell\\
			
			\\ \textit{Superscript}\\
			$\mathrm{\bullet}^{\delta C}$ \> correction to the discontinuous function\\
			$\mathrm{\bullet}^{\delta D}$ \> uncorrected discontinuous value\\
			$\mathrm{\bullet}^{\delta I}$ \> common value at interface\\
			$\mathrm{\bullet}^T$ \> vector or matrix transpose\\
			$\mathrm{\bullet}^{\delta}$ \> discontinuous value\\
			$\hat{\mathrm{\bullet}}$ \> variable transformed to reference domain\\
			$\overline{\mathrm{\bullet}}$ \> function averaged of the reference domain\\
			$\tilde{\mathrm{\bullet}}$ \> function transformed to polynomial space\\
			
			\\ \textit{Symbols or Operators} \\
			$\poch{x}{i}$ \> rising Pochhammer function of $x$ with $i$ steps\\
			$_nF_m(\mathbf{N},\mathbf{M};z)$ \> the n-m generalised hypergeometric function at $z$ \cite{Bailey1933,Bateman1953}\\

			\\ \textit{Abbreviations}\\
			CPR \> Correction Procedure via Reconstruction\\
			DG \> Discontinuous Galerkin\\
			FR \> Flux Reconstruction\\
			GJFR \> Generalised Jacobi stable Flux Reconstruction\\
			GLSFR \> Generalised Lebesgue Stable Flux Reconstruction\\
			GSFR \> Generalised Sobolev stable Flux Reconstruction\\
			ILES \> Implicit Large Eddy Simulation\\
			LCL \> Lumped Chebyshev-Lobatto\\
			LCP \> Lifting Collocation Penalty\\ 
			LES \> Large Eddy Simulation\\
			LSD \> Legendre Spectral Difference\\
			NDG \> Nodal Discontinuous Galerkin\\
			OSFR \> Original Stable Flux Reconstruction\\
			qDG \> quasi Discontinuous Galerkin\\
			SD \> Spectral Difference\\
			TGV \> Taylor-Green Vortex\\
			
		\end{tabbing}
\end{appendices}


\end{document}